\newcounter{pulse}[section]
\numberwithin{pulse}{section}  
\newcommand{\thf}{\sc} 
\theoremstyle{plain}
\newtheorem{theorem}[pulse]{\thf Theorem}
\newtheorem{proposition}[pulse]{\thf Proposition}
\newtheorem{claim}[pulse]{\thf Claim}
\newtheorem{corollary}[pulse]{\thf Corollary}
\theoremstyle{definition}
\newtheorem{definition}[pulse]{\thf Definition}
\newtheorem{notation}[pulse]{\thf Notation}
\theoremstyle{remark}
\newtheorem{remark}[pulse]{\thf Remark}
\newcommand{\M}{{\rm{M}}}
\newcommand{\GL}{{\rm GL}}
\newcommand{\On}{{\rm O}_n}
\newcommand{\Dn}{{\rm D}_n}
\newcommand{\Tn}{{\rm T}_n}
\newcommand\SL{{\rm SL}}
\newcommand{\RR}{\mathbb{R}}
\newcommand\ZZ{{\mathbb Z}}
\newcommand{\Fo}{F_{\rm o}}
\newcommand{\SOn}{{\rm SO}_n}
\newcommand{\ke}{black}
\newcommand{\me}{black}
\title{Discrete frames for $L^2(\RR^{n^2})$ arising from tiling systems on $\GL_n({\mathbb R})$}
\author[Mahya Ghandehari]{Mahya Ghandehari}
\address{Department of Mathematical Sciences, University of Delaware}
\email{mahya@udel.edu}
\author[Kris Hollingsworth]{Kris Hollingsworth}
\address{School of Mathematics, University of Minnesota}
\email{kgh@umn.edu}
\renewcommand\paragraph[1]{\par\vspace{1em}\noindent\textbf{#1}}
\begin{document}

\begin{abstract}
A discrete frame for $L^2(\mathbb R^d)$ is a countable sequence $\{e_j\}_{j\in J}$ in $L^2(\mathbb R^d)$ together with real constants $0<A\leq B< \infty$ such that
\[
  A\|f\|_2^2 \leq \sum_{j\in J}|\langle f,e_j \rangle |^2 \leq B\|f\|_2^2,
\]
for all $f\in L^2(\mathbb{R}^d)$.
We present a method of sampling continuous frames, which arise from square-integrable representations of affine-type groups, to create discrete frames for high-dimensional signals.
Our method relies on partitioning the ambient space by  using  a suitable ``tiling system".
We provide all relevant details for constructions in the case of  $\M_n(\RR)\rtimes \GL_n(\RR)$, although the methods discussed here are general and could be adapted to some other settings.
Finally, we prove significantly improved frame bounds over the previously known construction for the case of $n=2$.

\end{abstract}
\maketitle

\noindent
{\sc Keywords:} Continuous Wavelet Transform, Square-Integrable Representation, Discrete Frame,

Quasiregular Representation, Tiling System




\section{Introduction}
\label{sec:introduction}

An important and challenging problem in frame theory is to construct frames which allow suitable representations for signals from various classes of interest.
Frames generalize the notion of a basis for a Hilbert space---that is, frames permit redundant encoding of signals while still yielding stable reconstruction methods.
Since they are explicitly constructed for specific classes of data at hand, generalizing the known frame constructions to other datasets is a very difficult task.
For instance, the classical Fourier basis efficiently represents a harmonic signal in terms of its frequencies (global property of the signal); however, it represents localized signals (i.e. functions with compact support) quite inefficiently.
A fruitful strategy for creating novel frames is to employ the theory of wavelets, which emerged over three decades ago. 
Classical 1-dimensional wavelets may be formed in such a way as to have rapid decay in both spatial and frequency domains, and thus may be used to construct frames for the space of localized 1-dimensional signals.
Unfortunately, the classical wavelet transform is ineffective when dealing with 2-dimensional signals.
In fact, this transform is isotropic and cannot provide information about the geometry of nonisotropic signals in two dimensions.
However, there are instances of 2-dimensional wavelet-type transforms, such as curvelet transforms (introduced in 2000, see \cite{curvelet2002, curvelet2000,  curvelet2003, cts-curvelet-I,cts-curvelet-II}) and shearlet transforms (introduced in 2006, see \cite{shearlet2006,shearlet2,discrete-shearlet}), which have proven extremely successful in the analysis and denoising of signals presenting anisotropic features. 

As the dimension of the signal space grows, developing suitable and efficient frames becomes highly nontrivial.
The easiest method for handling multi-dimensional cases would be to simply use the tensor product of  1-dimensional solutions. However,  this approach does not capture many of the geometric features of high-dimensional signals.
The methods in this article are based on a representation theoretic point of view, developed in \cite{BeT}, where a general framework for the construction of higher dimensional continuous wavelet transforms was investigated.
Essentially, if  a locally compact group $H$ acts on $\mathbb{R}^n$ in such a manner that  $H$ acts freely and transitively on an open subset ${\mathcal O}$ in $\widehat{{\RR^n}}$, then an associated continuous wavelet transform theory can be developed as described in Section \ref{sec:notation}.
The 2-dimensional continuous shearlet transform can be viewed in this manner (see \cite{ScT} and \cite{KuL}).
Various methods, such as careful geometric techniques, can then be used to discretize continuous wavelet transforms, or continuous frames resulting from them, in order to produce discrete frames.

\paragraph{Discrete Frames.}
%
Discrete frames were initially introduced in 1952 by Duffin and Schaeffer \cite{DuS}, but it was not until the mid 1980s into the early 1990s that their use became of increased interest due to the work of Daubechies and her collaborators \cite{DaubechiesGrossmanMeyer1986, Daubechies1990}.
Formally, a \emph{discrete frame} for a separable Hilbert space $\mathcal{H}$ is a set of vectors $\{\phi_x\}_{x\in X}$ with $X$ a countable index set such that for every vector $f\in \mathcal{H}$,%
\[ 
  A\|f\|_{\mathcal{H}}^2
    \leq \sum_{x\in X} |\langle f, \phi_x \rangle_{\mathcal H} |^2
    \leq B\|f\|_{\mathcal{H}}^2,
\]%
for some positive real numbers $A$ and $B$.
If these inequalities are satisfied, the vectors form a stable, possibly redundant system which still allows for reconstruction from the frame coefficients $\{\langle f, \phi_x \rangle\}_{x\in X}$.
Except for the special case of \emph{tight frames} (when $A=B$), reconstruction requires iterative schemes for which the convergence rate is highly sensitive to the frame \emph{condition number}, defined as the ratio $\frac{B}{A}$;
consequently, one goal in designing frames for real applications is to minimize this ratio.
For a detailed introduction to frame theory, see \cite{FramesMemoir2000}.

\paragraph{Continuous Frames.}
A \emph{continuous frame} for a separable Hilbert space $\mathcal{H}$ is a collection of vectors $\{\phi_x\}_{x\in X}$ with $X$ a locally compact Hausdorff space equipped with a positive Radon measure $\mu$ satisfying
\[ 
  A\|f\|_{\mathcal{H}}^2
    \leq \int_X |\langle f, \phi_x \rangle_{\mathcal H}|^2 \, d\mu(x) 
    \leq B\|f\|_{\mathcal{H}}^2 
    \qquad \forall f\in \mathcal{H},
\]%
for some positive real numbers $A$ and $B$.
Notice that when $\mu$ is the counting measure on a countable space $X$, this agrees with the previous definition of a discrete frame.

The term continuous frame appears to be attributable to Ali, Antoine, and Gazeau in \cite{AliAntoineGazeau1993}, although the concept did not originate with them.
Continuous frames for $L^2(\RR^n)$ were known to Calder\'on in the 1960s (see \cite{Calderon1964})---as such, some authors refer to the following as the \emph{Calder\'on reproducing formula} (for example, \cite{Laugesen2002}).
Essentially, when $\{\phi_x\}_{x\in X}$ is a continuous frame, there exist vectors $\{\tilde{\phi}_x\}_{x\in X}$, called the \emph{dual frame}, such that%
\[ 
  f 
    = \int_X \langle f, \phi_x\rangle \tilde{\phi_x} \, d\mu(x), 
    \qquad \forall f\in \mathcal{H}
\]%
where the integral is interpreted in the weak sense.
A thorough introduction for continuous reproducing formulas can be found in section 2 of \cite{Fornasier2005}.

A \emph{wavelet frame} is one in which the frame vectors are all formed from translations and dilations (or translations and modulations in the case of Gabor frames) of a single \emph{mother} or \emph{analyzing wavelet}.
Generalized versions of continuous wavelet transforms and frames, defined in Section \ref{sec:notation},  have been investigated extensively in the past couple of decades.
In the present work, we restrict ourselves to constructions on Hilbert spaces, but information on continuous frames in certain Banach spaces can be found in \cite{Fornasier2005}.
For examples of wavelet transforms based on the theory of square-integrable representations, see \cite{shearlet2006, Currey-Fuhr-Taylor, Twareque2014, Namngam-Schulz, Cordero-Tabacco, MacArthur-Taylor},
 and for the general theory governing their construction and behavior see \cite{Cordero-etal, Fuhr-2010, Fuhr-Mayer, Fuhr-wavefront,Fuhr-coorbit, Fuhr-book}.

\paragraph{Discretization.}
{\color{\me}
In this article, we focus on the problem of constructing discrete  frames by sampling continuous wavelet frames in the orbit of a square-integrable irreducible representation.
In 1980's, H.G. Feichtinger and K.H. Gr\"{o}chenig developed \emph{coorbit theory} \cite{FG88, CoorbitAtomicDecomposition:FG, FG89}, which 
has become a powerful tool for producing atomic decomposition for a variety of Banach spaces (of functions or distributions). 
It is notable that many important classical function spaces in harmonic analysis are instances of coorbit spaces. This remarkable fact motivates the study of coorbit spaces, as a means for developing unified methods to study various function spaces. 
The coorbit theory starts with  an integrable representation $\pi$ of a locally compact group $G$ on a Hilbert space ${\mathcal H}_\pi$; coorbit spaces are then defined as certain Banach spaces related to $\pi$. Luckily, continuous wavelet transforms defined using $\pi$ (and suitable analyzing wavelets $\xi\in {\mathcal H}_\pi$) extend to every coorbit space. This crucial feature enables one to view coorbit spaces as certain subspaces of ``reasonable'' Banach function spaces over the ambient group $G$.

Atomic decompositions of a coorbit space can be obtained by means of discretizing certain convolution operators on the associated Banach function space. This approach leads to results of the following type: if the analyzing vector $\xi$ corresponds to a function of a suitable Weiner-type space, then for any sufficiently dense well-spread subset $\{x_i\}_{i\in I}$ of $G$, vectors in the coorbit space can be decomposed into atoms of the form $\{\pi(x_i)\xi\}_{i\in I}$. Such decompositions are usually referred to as \emph{atomic decompositions} in mathematics or discrete expansions with respect to \emph{coherent states} in physics.
Atomic decompositions were used in  \cite{DescribingFunctions:G} to construct discrete Banach frames in the orbit of an integrable irreducible representation. 
}
The condition of  integrability was replaced by the weaker assumption of square-integrability in  \cite{SamplingTheorem:FG}.
A more general answer to the discretization problem is obtained in \cite{freeman:2016:DiscretizationProblem}, where the authors prove that every bounded continuous frame may be sampled to obtain a discrete frame.
Building on this work, F\"{u}hr and Oussa, \cite{fuhr:2018:GroupsWFramesOfTranslates}, found large classes of Lie groups $G$ for which $L^2(G)$ admits discrete frames of translates.
Additionally, they proved several necessary and some sufficient conditions for existence of such discrete frames.
In contrast,  it was shown in \cite{CHRISTENSEN1999292} that $L^2(\RR^n)$ does not admit discrete frames of pure translates, in the sense that no collection $\bigcup_{k=1}^r\{g_k(x-a)\}_{a\in \Gamma}$
can form a frame for $L^2(\RR^n)$.  This demonstrates the necessity of more complex methods, such as the one we explore in the current work.

None of the above mentioned results are explicitly constructive in nature.
{\color{\ke}%
In fact, computing an explicit sufficiently dense well-spread subset to guarantee atomic decompositions in \cite{CoorbitAtomicDecomposition:FG} is a very difficult task.}
Examples of previous approaches for discretizing continuous wavelet transforms to construct discrete frames may be found in \cite{Hemmat2017, Maggioni2004, NonUniformPainlessDecomps:CMR}.
This question has also been investigated in the study of quantum mechanics as a special case of coherent states, see, for example, chapter 9 of \cite{AliAntoineGazeau2014book} and references therein.

\paragraph{Contents.}
In the present article, we restrict our attention to $L^2(\RR^{n^2})$, as we seek to improve the results of \cite{GST},
wherein the theory of square-integrable representations was combined with the geometry of the Euclidean space to construct discrete frames for $L^2({\mathbb R}^4)$.
{\color{\ke}%
The current article exhibits a prototypical example of methods for constructing concrete (and well-structured) frames; these methods need to be tailored to the geometry of the group at hand. Once a group with a suitable underlying geometric structure is chosen, we can take advantage of ideas such as tiling systems to produce frames, whereas frame constructions  in coorbit theory are based on the notions of well-spread sets and partitions of unity. For a detailed discussion, see Remark \ref{refremark}.}
As in \cite{GST}, we follow the general method developed in \cite{BeT} to construct a tight continuous frame using representation theory of the group $\M_n(\RR) \rtimes \GL_n(\RR)$.  We then obtain a discrete frame through careful geometric techniques for discretizing the reproducing formula.
As a result, we improve the construction in \cite{GST} significantly by reducing the frame condition number from about 1782 to 33 for $L^2({\mathbb R}^4)$.
More importantly, the previous construction was provided only for $L^2(\RR^4)$ which we generalize to $L^2(\RR^{n^2})$ for any $n\in \mathbb{N}$.
Note that square-integrable representations provide us with the only reasonable framework to produce discrete frames, since the existence of a discrete frame in this setting implies the square-integrability of the unitary representation generating the associated continuous frame (see \cite{Aniello2001} for more details).

Finally, we compare our work with that of Heinlein and his collaborators in \cite{Heinlein2003,HeinleinDrexl2003}.
The two approaches rely on the same representation theoretic viewpoint. Indeed, we both discretize continuous wavelet frames obtained from a square-integrable representation of the affine group; however, we note three key differences.
First, their strategy is to use integrated wavelets, which are averages of a wavelet in the Fourier domain over any countable partition of the space (in our case $\RR^{n^2}$) into compact sets. 
With this method, they obtain tight frames, but this comes at the cost that the analysis and reconstruction filters for $n$ resolution scales require $(n+1)$ Fourier transforms.
Consequently, the computational cost of this method on higher-dimensional data would be extremely high, whereas  our method only requires one Fourier inversion of the analyzing wavelet.
Second, we obtain and work with a much more structured  decomposition than their general partition (which they call detail decomposition). 
Our approach starts with a ``tile'' which under the action of a discrete, countable set covers the space.
The structure of our tiling system reduces the amount of information needed for implementation, as one only needs to know the ``mother tile'' and the form of the discrete set acting on it. Thus, the challenge here is to obtain such suitable tiling systems, which we overcome by carefully investigating  the geometry of the space.
Last, integrated wavelets require a two-step discretization.
That is, they must discretize translations and dilations separately, and it is the intermediate discretization on dilations alone which provides a tight frame.
Our method performs the discretization simultaneously, allowing for a one-step process.

\paragraph{Organization.}
This paper is organized as follows.
In Section \ref{sec:notation} we collate all necessary definitions, notations, and background. %
In Section \ref{sec:construction} we provide the general theory for constructing discrete frames from the continuous wavelet transform by means of tiling systems. %
In Section \ref{sec:GeneralTilingSystem} we provide a general construction for tiling of $\GL_n(\RR)$ for any $n\in \mathbb{N}$. %
In Section \ref{sec:ComputationAndExample} we compute the frame bounds for the tiling system and derive an upper bound on the frame condition number as a function of the dimension $n$.
We conclude the section by providing explicit details of the concrete construction for the case when $n=2$. %
In section \ref{sec:Discussion} we end with a brief discussion of the future work.

\section{Notations and Definitions}\label{sec:notation}

Let $n\in {\mathbb N}$, and $\M_n(\mathbb{R})$ denote the set of $n\times n$ real matrices. Equipped with matrix addition and  the topology of $\mathbb{R}^{n^2}$, the set $\M_n(\RR)$ can be viewed as a locally compact abelian group. Let $\GL_n(\mathbb{R})$ denote the subset of $\M_n(\RR)$ containing all  $n\times n$ real matrices with nonzero determinant. It is well-known that $\GL_n(\RR)$ is an open subset of $\M_n(\RR)$, as the determinant  is a continuous function (in fact a polynomial) in the matrix entries. So, $\GL_n(\RR)$ turns into a locally compact group, when equipped with matrix multiplication and the induced topology of $\RR^{n^2}$.  
Elements of $\GL_n(\RR)$ and $\M_n(\RR)$ can be combined to form \emph{affine transformations} as defined below.
\begin{definition}
For $x\in \M_n(\RR)$ and $h\in \GL_n(\RR)$, let $[x,h]$  denote the affine transformation of $\M_n(\RR)$ given by 
\[
  [x,h]y=hy+x\quad \mbox{ for } y\in \M_n(\RR).
\]
\end{definition}
Let $\M_n(\RR)\rtimes \GL_n(\RR)=\left\lbrace [x,h]:  x\in \M_n(\RR), h\in \GL_n(\RR)\right\rbrace$ denote the collection of all affine transformations  defined above. 
Composition of transformations can be seen as the following product operation.
\begin{equation}\label{1}
[x_1, h_1][x_2, h_2]=[x_1+h_1x_2, h_1h_2].
\end{equation}
Then $G_n:=\M_n(\RR)\rtimes \GL_n(\RR)$, together with product (\ref{1}), forms a non-abelian locally compact group when given the product topology. Let $I_n$ denote the $n\times n$ identity matrix, and $0_n$ denote the $n\times n$ zero matrix. It is very easy to check that 
$[0_n, I_n]$ is the identity of $G_n$, and $[x,h]^{-1}=[-h^{-1}x, h^{-1}]$ for $[x,h]\in G_n$. 

\paragraph{Haar Integration.} The most useful measure on  $G_n$ is its (unique, up to scaling) left-invariant measure, called the left Haar measure, which we explicitly describe. In what follows, all the functions appearing in the integration formulas are integrable and defined on the appropriate domains. 
First, we equip $\M_n(\RR)$ with Lebesgue measure under the identification with $\mathbb{R}^{n^2}$,  and let \hspace{3mm}$\int\limits_{\mathclap{\M_n(\RR)}}f(x)\,dx$  denote the Lebesgue integration. That is, 
\[
dx=dx_{11}dx_{12}\cdots dx_{1n}dx_{21}\cdots dx_{n1}\cdots dx_{nn}\ \mbox{ if } x=\begin{pmatrix}
x_{11}& x_{12} &\cdots & x_{1n}\\
\vdots& \vdots &\ddots & \vdots\\
x_{n1}& x_{n2} &\cdots & x_{nn}
\end{pmatrix}.
\]
For $\GL_n(\mathbb{R})$, the left Haar integration is given by \hspace{3mm}$\int\limits_{\mathclap{\GL_n(\mathbb{R})}}g(h)\,dh$, where
\begin{equation*}\label{3}
dh=\frac{dh_{11}dh_{12}\cdots dh_{1n}dh_{21}\cdots dh_{n1}\cdots dh_{nn}}{|\det(h)|^n} \quad \mbox{ if }h=\begin{pmatrix}
h_{11}& h_{12} &\cdots & h_{1n}\\
\vdots& \vdots &\ddots & \vdots\\
h_{n1}& h_{n2} &\cdots & h_{nn}
\end{pmatrix},
\end{equation*}
where $h=[h_{ij}]_{i,j=1}^n$ is a generic element of $\GL_n(\RR)$.
It turns out that the left Haar measure on $\GL_n(\mathbb{R})$ is also right invariant, i.e. $\GL_n(\RR)$ is unimodular.
So, for any $h'\in \GL_n(\mathbb{R})$ and a compactly supported function $g$,
$$\int_{\GL_n(\mathbb{R})}g(h'h)\;dh=\int_{\GL_n(\mathbb{R})}g(hh')\;dh=\int_{\GL_n(\mathbb{R})}g(h^{-1})\;dh=\int_{\GL_n(\mathbb{R})}g(h)\;dh.$$
Now we can describe left Haar integration on $G_n$. For a compactly supported function $f:G_n\rightarrow {\mathbb C}$,
\begin{equation}\label{4}
\int_{G_n}f([x, h])\;d[x, h]=\int_{\GL_n(\mathbb{R})}\int_{\M_n(\mathbb{R})}f([x, h])\, \frac{dx\;dh}{|\det(h)|^{n}}.
\end{equation}
It is a routine calculation to show that the integration defined in (\ref{4}) is invariant under left translations.

This integration is not right invariant. However, we have the following formula for handling the case of right translation.
$$\int_{G_n}f([x, h][y,k])\;d[x, h]=|\det(k)|^n\int_{G_n}f([x, h])\;d[x, h],$$
 for every $[y,k]\in G_n$.  See \cite{HeR} or \cite{Fol}
for the properties of Haar measure and the Haar integral in general.
 
\paragraph{Transferring to Fourier Domain.} 
It turns out that the existence of a continuous wavelet transform depends crucially on the geometric features of the underlying group, and in particular the geometry of the action of $\GL_n(\RR)$ on the Pontryagin dual of $\M_n(\RR)$. 
To make notation more clear, $\M_n(\RR)$ is denoted by $A$, when it is identified with $\RR^{n^2}$ as an abelian group. Let $\widehat{A}$ denote the dual (i.e. the group of characters) of $\M_n(\RR)$ under this identification. 
For the purpose of Fourier analysis on $\mathbb{R}^{\mathrlap{n^2}}\,\;,$ identified with $A$, we choose the
following way to pair $A$ with $\widehat{A}$. 
 For $b=[b_{ij}]_{i,j=1}^n\in A$, define
$\chi_{b} \in \widehat{A}$ by
\begin{equation}\label{5}
\chi_b(x)=e^{2\pi i\text{tr}(bx)}, ~~ \text{for}~~  x\in A.
\end{equation}
We have, $\widehat{A}=\left\lbrace \chi_b: ~~ b\in A\right\rbrace$. Thus, $\widehat{A}$
can also be identified with $\mathbb{R}^{n^2}$, and Haar integration on $\widehat{A}$
is simply the Lebesgue integral,  i.e.
$$\int_{\widehat{A}}g(\chi)\;d\chi=\int_{\mathbb{R}^{n^{2}}}g(\chi_b)\;db=
\int_{\mathbb{R}}\cdots\int_{\mathbb{R}}g(\chi_{(b_{11},\cdots , b_{nn})})
\;db_{11}\cdots db_{nn}.$$
For $f\in L^1(A)$, the Fourier transform
$\widehat{f}\colon \widehat{A}\to\mathbb{C}$ is given by
$\widehat{f}(\chi)=\int_Af(x)\overline{\chi(x)}dx,$ for all $\chi\in \widehat{A}$.
If $f\in L^1(A)\cap L^2(A)$, then $\widehat{f}\in L^2(\widehat{A})$ and
$\|\widehat{f}\|_2=\|f\|_2$. So, the Fourier transform extends to a unitary map
$\mathcal{P}\colon L^2(A)\to L^2(\widehat{A})$, the Plancherel transform,
such that $\mathcal{P}f=\widehat{f}$, for all $f\in L^1(A)\cap L^2(A)$.

The group $\GL_n(\RR)$ acts on  $A$ by matrix multiplication. This action determines an action of $\GL_n(\RR)$ on the dual space $\widehat{A}$ by
 $h\cdot\chi_b=\chi_{bh^{-1}}$, for $b\in A$ and $h\in \GL_n(\RR)$.
This action scales Lebesgue measure, so that, for any integrable function $\xi$ on $\widehat{A},$
\begin{equation}\label{6}
\int_{\widehat{A}}\xi(\chi)\;d\chi=|\det(h)|^{-n}\int_{\widehat{A}}\xi(h\cdot \chi)\;d\chi.
\end{equation}
%
\paragraph{A Square-Integrable Irreducible Representation.}
We now give two equivalent forms of the quasi-regular representation of $G_n$, which is the irreducible 
representation that has been used in  \cite{GST} to construct continuous wavelet transforms. 
Let ${\mathcal H}$ be a Hilbert space, and ${\mathcal U}({\mathcal H})$ denote the group of unitary operators on ${\mathcal H}$.
A \emph{continuous unitary representation} of  a locally compact group $G$ on ${\mathcal H}$ is a group homomorphism $\pi:G\rightarrow {\mathcal U}({\mathcal H})$ which is WOT-continuous, i.e. for every vectors $\xi$ and $\eta$ in ${\mathcal H}$, the function
$$\pi_{\xi,\eta}:G\rightarrow {\mathbb C}, \quad g\mapsto \langle\pi(g)\xi,\eta\rangle$$
is continuous.
Functions of the form $\pi_{\xi,\eta}$, for vectors $\xi$ and $\eta$ in ${\mathcal H}$, are called the \emph{coefficient functions} of $G$ associated with the representation $\pi$. 
If $\sigma_1$ and $\sigma_2$ are two representations of $G$ on $\mathcal{H}_1$ and $\mathcal{H}_2$ respectively, we say $\sigma_1$ is (unitarily) equivalent to $\sigma_2$ if there exists a unitary transformation $U\colon \mathcal{H}_1\to \mathcal{H}_2$  such that %
\[
  U\sigma_1(x)=\sigma_2(x)U, ~ ~~ ~ \text{for all}~~~~  x\in G.
\] 
A representation $\pi$ is called irreducible if $\left\{ 0\right\} $ and $\mathcal{H}$ are the only $\pi$-invariant closed subspaces of $\mathcal{H}$. 
Let $\widehat{G}$ denote the space of equivalence classes of irreducible representations of $G$. 
An introduction to the representation theory of locally compact groups can be found in \cite{Fol}.

An irreducible representation $\pi:G\rightarrow {\mathcal U}(\mathcal{H})$ is called \emph{square-integrable} if there exist nonzero $\xi, \eta\in \mathcal{H}$ such that $\pi_{\xi,\eta}\in L^2(G)$, where $L^2(G)$ is the collection of all square-integrable complex-valued functions on $G$.
With $\xi\in \mathcal{H}\setminus\left\lbrace 0\right\rbrace $ fixed, if there exists one nonzero $\eta^{\prime}\in \mathcal{H}$ with $\pi_{\xi, \eta^{\prime}}\in L^2(G),$ then $\pi_{\xi, \eta}\in L^2(G)$ for any $\eta\in \mathcal{H}.$ Such a vector $\xi$ is called \emph{admissible} and the set of admissible vectors is dense in $\mathcal{H}$.
For a comprehensive discussion of continuous wavelet transforms and their connection with square-integrable representations, see \cite{Fuhr-book}.

%
The results in the following proposition are proved in \cite{GST}, so we skip the proof. 
\begin{proposition}
 Let  ${\mathcal H}_1=L^2(A)$ and ${\mathcal H}_2=L^2(\widehat{A})$. Then,
 \begin{itemize}
\item[(i)] $\rho$ is a unitary representation, where
\begin{equation*}\label{7}
\rho:G_n\rightarrow {\mathcal U}({\mathcal H}_1), \quad \rho[x, h]g(y)=|\det(h)|^{-n/2}g(h^{-1}(y-x)), 
\end{equation*}
for all  $[x, h]\in G_n$ and $y\in A$.
%
\item[(ii)] $\pi$ is a unitary representation, where
\begin{equation*}\label{8}
\pi:G_n\rightarrow {\mathcal U}({\mathcal H}_2), \quad \pi[x, h]\xi(\chi)=|\det(h)|^{n/2}\,\overline{\chi(x)}\,\xi(h^{-1}\cdot \chi), 
\end{equation*}
for all $[x, h]\in G_n$ and $\chi\in \widehat{A}$.
\item[(iii)] The representations $\rho$ and $\pi$ are equivalent square-integrable irreducible representations of $G_n.$
Namely, we have $\pi[x,h]\mathcal P = \mathcal P \rho[x,h]$, where $\mathcal P$ is the Plancherel transform.
%
\end{itemize}
\end{proposition}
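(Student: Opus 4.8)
The plan is to verify each of the three claimed assertions in turn, treating (i) and (ii) as essentially routine unitarity and homomorphism checks, and reserving the real effort for (iii), where the equivalence via the Plancherel transform must be established and then leveraged to transport square-integrability and irreducibility from one side to the other.

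\medskip

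\noindent\textbf{Parts (i) and (ii): unitarity and the homomorphism property.}
First I would confirm that $\rho$ and $\pi$ are well-defined group homomorphisms into the respective unitary groups. For $\rho$, the isometry property follows from a change of variables: writing $\rho[x,h]g(y) = |\det(h)|^{-n/2} g(h^{-1}(y-x))$, I would substitute $z = h^{-1}(y-x)$, so that $dy = |\det(h)|^{n}\,dz$ (since we integrate over $A \cong \RR^{n^2}$ and the Jacobian of $y \mapsto h^{-1}y$ scales Lebesgue measure by $|\det(h)|^{-n}$), and the factor $|\det(h)|^{-n}$ from squaring the normalization cancels exactly, giving $\|\rho[x,h]g\|_2 = \|g\|_2$. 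Surjectivity onto $\mathcal{H}_1$ is immediate from invertibility of the affine map. The homomorphism identity $\rho[x_1,h_1]\rho[x_2,h_2] = \rho([x_1,h_1][x_2,h_2])$ is then a direct substitution using the composition law \eqref{1}, namely $[x_1+h_1x_2,\,h_1h_2]$, and matching the normalization constants via $|\det(h_1h_2)| = |\det(h_1)||\det(h_2)|$. For $\pi$, the analogous isometry check uses the scaling relation \eqref{6} for the dual action $h\cdot\chi_b = \chi_{bh^{-1}}$, together with the fact that $|\overline{\chi(x)}| = 1$, so the modulation factor is harmless for norms; the homomorphism property again reduces to \eqref{1} after carefully tracking how $\overline{\chi(x)}$ interacts with the dual action. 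WOT-continuity in both cases follows from continuity of the group operations and dominated convergence on a dense subset of compactly supported continuous functions.

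\medskip

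\noindent\textbf{Part (iii): equivalence via Plancherel.}
The heart of the matter is the intertwining identity $\pi[x,h]\mathcal{P} = \mathcal{P}\rho[x,h]$. The plan is to verify this on the dense subspace $L^1(A)\cap L^2(A)$, where $\mathcal{P}f = \widehat{f}$ is given by the absolutely convergent integral $\widehat{f}(\chi) = \int_A f(x)\overline{\chi(x)}\,dx$, and then extend by continuity since $\mathcal{P}$ is unitary. Concretely, I would compute $\mathcal{P}(\rho[x,h]g)(\chi)$ by substituting the definition of $\rho$, performing the change of variables $z = h^{-1}(y-x)$ inside the Fourier integral, and using the pairing $\chi_b(hz + x) = e^{2\pi i\,\mathrm{tr}(b(hz+x))} = \overline{\chi(x)}^{-1}\,\chi_{h^\top b}(z)$ from \eqref{5} to factor out the modulation $\overline{\chi(x)}$ and convert the dilation by $h$ into the dual action $h^{-1}\cdot\chi$. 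Matching the $|\det(h)|$ powers that arise from the normalization constant and the Jacobian should reproduce exactly $\pi[x,h]\widehat{g}(\chi) = |\det(h)|^{n/2}\overline{\chi(x)}\,\widehat{g}(h^{-1}\cdot\chi)$. \emph{The step I expect to be the main obstacle} is getting the transpose/inverse bookkeeping in the dual action precisely right: the relation $h\cdot\chi_b = \chi_{bh^{-1}}$ pairs $b$ with $h^{-1}$ on the \emph{right}, while the substitution in the spatial integral naturally produces $h^\top b$ (or $bh$, depending on the trace convention), so I would need to check carefully that the chosen pairing $\mathrm{tr}(bx)$ makes these two operations consistent, possibly forcing a specific convention (e.g.\ that $h\cdot\chi_b$ corresponds to $\chi_{bh^{-1}}$ rather than $\chi_{h^{-1}b}$ or $\chi_{bh}$). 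Once the convention is pinned down, the computation is mechanical.

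\medskip

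\noindent\textbf{Transporting square-integrability and irreducibility.}
Finally, with the unitary intertwiner $\mathcal{P}$ in hand, irreducibility and square-integrability transfer automatically between $\rho$ and $\pi$, so it suffices to establish them for one representation; I would work with $\pi$ on $\mathcal{H}_2 = L^2(\widehat{A})$, where the action is transparent. For irreducibility, the dual action of $\GL_n(\RR)$ on $\widehat{A} \cong \RR^{n^2}$ is (up to a null set) transitive on the open orbit consisting of invertible characters, and combined with the modulation operators $\xi \mapsto \overline{\chi(x)}\,\xi$ this should let me show that any nonzero $\pi$-invariant closed subspace must be all of $\mathcal{H}_2$ — by a standard argument that the multiplication operators generated by the modulations have no nontrivial invariant subspace except those coming from invariant measurable subsets of $\widehat{A}$, which the transitive dilation action then forces to be null or conull. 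For square-integrability, I would exhibit an explicit admissible vector: choosing $\xi$ supported on a suitable subset of the open orbit where $|\det|$ is bounded away from $0$ and $\infty$, the orthogonality (Duflo–Moore) relations reduce the finiteness of $\int_{G_n}|\pi_{\xi,\eta}([x,h])|^2\,d[x,h]$ to an integral over $\GL_n(\RR)$ of $\int_A |\widehat{(\cdot)}|^2\,dx$-type quantities that converge precisely because of the $|\det(h)|^{-n}$ weight in the Haar measure \eqref{4}; since the excerpt grants that these facts are proved in \cite{GST}, I would cite that computation rather than reproduce it, noting only that the admissibility condition amounts to a Calderón-type integrability $\int_{\GL_n(\RR)} |\widehat{g}(h^{-1}\cdot\chi)|^2\,dh < \infty$ for a.e.\ $\chi$.
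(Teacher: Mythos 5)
Your proposal is correct, but there is nothing in the paper to compare it against line by line: the paper explicitly skips the proof of this proposition, deferring entirely to \cite{GST} (which, moreover, treats only the case $n=2$, i.e.\ $L^2(\RR^4)$). So your self-contained verification is genuinely more than what the paper records, and it is the kind of argument a reader would need anyway to justify the statement for general $n$. The details you sketch all check out: left multiplication $z\mapsto hz$ on $\M_n(\RR)\cong\RR^{n^2}$ has Jacobian $|\det(h)|^n$, which is exactly what the normalization $|\det(h)|^{-n/2}$ in $\rho$ compensates, and the scaling relation \eqref{6} plays the same role for $\pi$. The ``transpose bookkeeping'' you single out as the main obstacle in (iii) in fact evaporates under the trace pairing \eqref{5}: since $\mathrm{tr}\bigl(b(hz)\bigr)=\mathrm{tr}\bigl((bh)z\bigr)$, the substitution $y=hz+x$ gives
\[
\mathcal{P}(\rho[x,h]g)(\chi_b)=|\det(h)|^{n/2}\,\overline{\chi_b(x)}\,\widehat{g}(\chi_{bh}),
\]
and the paper's convention $h\cdot\chi_b=\chi_{bh^{-1}}$ yields $h^{-1}\cdot\chi_b=\chi_{bh}$, so this is precisely $\pi[x,h]\widehat{g}(\chi_b)$ with no transposes appearing; the convention and the pairing are consistent by design. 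Your transport of irreducibility and square-integrability through the unitary intertwiner is the standard and correct argument; for irreducibility one needs (as you indicate) that the von Neumann algebra generated by the modulations is the full, maximal abelian multiplication algebra $L^\infty(\widehat{A})$, so invariant subspaces have the form $L^2(E)$, and transitivity of the dual action on the conull set of invertible $b$ forces $E$ to be null or conull. For square-integrability you ultimately cite \cite{GST} for the Calder\'on-type admissibility computation, which is consistent with (indeed strictly less deferential than) what the paper itself does.
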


\paragraph{Continuous Wavelet Transforms.}
We now use the square-integrable representations defined in the previous section to construct a continuous wavelet transform. Recall that by $A$ we denote $\M_n(\RR)$, when it is identified with $\RR^{n^2}$ as an abelian group. Let $\rho$ be the square-integrable representation defined earlier. 
\begin{definition}\label{def:wavelet-trans}
A function $\psi\in L^2(A)$ is called a \emph{wavelet} if
\begin{equation}\label{12}
\int_{\GL_n(\RR)}\left| \,\widehat{\psi}(\chi_h)\,\right|^2\,dh=1.
\end{equation}
A \emph{continuous wavelet transform} (CWT) associated with a  wavelet $\psi\in L^2(A)$ is  the linear transformation defined as 
$$V_{\psi}\colon L^2(\M_n(\RR))\to L^2(G_n), \quad V_{\psi}f[x,h]=\left\langle f,\rho[x,h]\psi\right\rangle,$$
for $f\in L^2(A), [x,h]\in G_n$.
\end{definition}
It is known that a continuous wavelet transform  $V_{\psi}$ is an isometry of $ L^2(\M_n(\RR))$ into $L^2(G_n)$, that is 
\[
  \langle f,g\rangle
    =\int_{G_n}
      \left\langle f, \rho[x,h]\psi\right\rangle
      \left\langle\rho[x,h]\psi,g\right\rangle
      \, d[x,h],
\]
for any $f,g\in L^2(\M_n(\RR))$. This formula can be written in the following weak integral form:
\begin{equation}\label{eqn:General Reconstruction Formula}
f=\int_{G_n}\left\langle f,\rho[x,h]\psi\right\rangle\rho[x,h]\psi\; d[x,h],
\end{equation}
for any $f\in L^2(A)$.  For notational convenience, we denote $\rho[x,h]\psi$ by $\psi_{x,h}$. 
See \cite{Fuhr-book}, for a detailed discussion about general CWTs,  and \cite{GST} for the details of the above mentioned wavelet transform. 

\section{Constructing Frames using CWTs}\label{sec:construction}
In this section, we present a summary of the results from \cite{BeT}, explaining how CWTs may be used to construct discrete frames.
The objective here is to construct frames of the form $\{\rho[x,h]\psi\}_{[x,h]\in P}$, where $P$ is a discrete subset of $G_n$, and $\rho$ and $\psi$ are as in Definition \ref{def:wavelet-trans}. The frame constructions in \cite{BeT} heavily rely on the concept of a ``tiling system'' which we define here. 

In \cite{GST}, the first author and collaborators use the approach of \cite{BeT}  to construct a discrete frame for $L^2(\RR^4)$. 
More precisely,  they obtain a suitable tiling system for $\GL_2(\RR)$, which they use to discretize the continuous wavelet transform on $\RR^4$.  In this section, we extend their frame construction to  general $n$, obtaining discrete frames for $L^2(\RR^{n^2})$. 
In addition, we obtain a much tighter gap between the frame bounds. 

For what follows, we restrict ourselves to the groups which are being studied in this paper; although our methods can be carried out in more general settings.

\begin{definition}\label{def:TilingSystem}
Let $P$ be a countable subset of $\GL_n({\mathbb R})$, and $F$ be an open relatively compact subset
 of $\GL_n(\RR)$. The pair $(F,P)$ is called a \emph{tiling system} for $\GL_n(\RR)$ if the following
two conditions are satisfied:
\begin{itemize}
\item[(i)] $\lambda_{\GL_n}(\overline{F}\cdot p \bigcap \overline{F}\cdot q)=0$ for every distinct pair $p,q\in P$,
\item[(ii)] $\lambda_{\GL_n}\left(\GL_n(\RR)\setminus\bigcup_{p\in P}\overline{F}\cdot p \right)=0$,
\end{itemize}
where $\lambda_{\GL_n}$ denotes the left Haar measure of $\GL_n(\RR)$, and $\overline{F}$ denotes the closure of $F$ in $\GL_n(\RR)$.
Note that conditions (i) and (ii) remain unchanged if one replaces $\lambda_{\GL_n}$ with $\lambda_{\M_n}$, the left Haar measure on $\RR^{n^2}$.
\end{definition}
\begin{remark}
In the above definition, we think of a subset $S$ of $A=\M_n(\RR)$ as a subset $\widetilde{S}$ of $\widehat{A}$ in the following natural manner: $\widetilde{S}=\{\chi_b:b\in S\}$. With this identification in mind, we have 
$\widetilde{\overline{F}\cdot p}=p^{-1}\cdot \widetilde{\overline{F}}$.
\end{remark}
\begin{remark}\label{rem:move-to-R}
Recall that we identify elements of  $\M_n(\RR)$ with vectors in $\RR^{n^2}$. We do so via the map $\Phi$ given by 
\begin{equation}\label{eq:Phi}
\left(\begin{array}{ccc}
x_{11} & \ldots& x_{1n} \\
\vdots&\ddots&\vdots\\
x_{n1}   &\ldots&  x_{nn}
\end{array}
\right) \xmapsto[]{\;\Phi\;} \left( x_{11},\ldots, x_{1n},\ldots,x_{n1},\ldots, x_{nn}\right).
\end{equation}
The $n\times n$ matrix structure is only used when multiplication is involved. 
This identification allows us to transfer the notion of tiling to $\RR^{n^2}$. Let $(F,P)$ be a tiling system for $\GL_n(\RR)$.
For each $p\in P$, let $L^2(\Phi(\overline{F}\cdot p))$ denote the closed subspace of
$L^2(\RR^{n^2})$ consisting of functions that are zero almost everywhere on
$\RR^{n^2}\setminus\Phi(\overline{F}\cdot p)$.
Noting that $\GL_n(\RR)$ is a co-null subset of $\M_n(\RR)$ (and thus $\RR^{n^2}$), we have that
\[
L^2\left(\RR^{n^2}\right)=\bigoplus_{p\in P}L^2(\Phi(\overline{F}\cdot p)).
\]
With this in mind, we can think of $(F,P)$ as a tiling system for $\M_n(\RR)$, or equivalently for $\RR^{n^2}$, as well. 
\end{remark}

We now give a brief review  of the method introduced in \cite{BeT} for constructing a frame from a tiling system. We first introduce some notations, which will be useful for the next theorem.
\begin{notation}\label{notation:cube-J}
Let  $(F,P)$ be a tiling system, and $R$ be a hypercube in $\M_n(\RR)$ whose interior contains $\overline{F}$, i.e. $R$ is defined by real numbers $a_{ij}<b_{ij}$, $1\leq i,j\leq n$ as follows.
$$R=\left\{[x_{ij}]\in \M_n(\RR): a_{ij}\leq x_{ij}\leq b_{ij} \ \mbox{ for }\ i,j=1,\ldots, n\right\}.$$
%
%
Let $|R|=\prod_{i,j=1}^{n}(b_{ij}-a_{ij})$ be the volume of the cube, and  $J=\left\{\left[\frac{m_{ij}}{b_{ji}-a_{ji}}\right]_{ij}\in \M_n(\RR): m_{ij}\in \ZZ\right\}$.  
\end{notation}
Every ${\gamma}\in J$ defines a character on the hypercube $R\subseteq \M_n(\RR)$ as follows. 
$$e_{\gamma}(y)=\frac{1}{\sqrt{|R|}}\mathds{1}_R({y})\exp\left(2\pi i\,  \text{tr}({\gamma}y)\right),\quad \mbox{ for } {y}\in R,$$
where $\mathds{1}_R$ is the characteristic function of $R$. Then $\{e_{{\gamma}}:\ {\gamma}\in J\}$ is an orthonormal basis for $L^2(R)$.

We now prove a slightly different version of Theorem 3 of \cite{BeT}. Even though our proof is similar to theirs, we manage to obtain a much tighter frame condition number due to our new definition for the constant $M$. 
In the following section, we will show that the conditions in our theorem can be met for  $G_n=\M_n(\RR)\rtimes\GL_n(\RR)$ for any $n\in {\mathbb N}$.

\begin{notation}\label{notation:Ip}
Let $(F, P)$ be a tiling system for $\GL_n(\RR)$, and $R$ be a hypercube containing $\overline{F}$. Let $\Fo$ be an open set satisfying $\overline{F}\subseteq \Fo\subseteq R$. For $p\in P$, define $I_{\Fo}(p):=\Big\{k\in P:\ \overline{F}\cdot p \cap \Fo\cdot k\neq \emptyset \Big\}$.
Let $M:=\sup_{p\in P}|I_{\Fo}(p)|$. Observe that $M$ is finite according to Lemma 4 of \cite{BeT}.
\end{notation}

{\color{\me} 
\begin{remark}\label{rem:well-spread} 
Tiling systems are closely related to well-spread sets from coorbit theory; see for example \cite[Definition 3.2]{CoorbitAtomicDecomposition:FG} for the definition of a well-spread set. Indeed, if $(F,P)$ is a tiling system, then $P$ forms an $\Fo$-dense and $F$-separated set. This is an easy consequence of 
the fact that for open sets the conditions ``disjointness'' and ``having an intersection of (Haar) measure 0'' are equivalent.
In a similar vein, the collection of characteristic functions $\{\mathds{1}_{F\cdot p}\}_{p\in P}$ can be thought of as a ``partition of unity'' in coorbit theory, with the adjustment that $\sum_{p\in P}\mathds{1}_{F\cdot p}\equiv 1$ almost everywhere. The reader may notice a continuity condition for the partition of unity in the coorbit theory literature. 
However, this condition is not necessary; see \cite[4.1]{DescribingFunctions:G} for a more general definition of partition of unity. 
\end{remark}
}

\begin{theorem}\label{theorem:frame}
Let $(F,P)$ be a tiling system for $\GL_n(\RR)$, with $R$ and $\Fo$ as in Notation \ref{notation:Ip}.
Let $g\in L^2(\M_n({\mathbb R}))$ be such that $\mathds{1}_{\overline{F}}\leq \widehat{g}\leq \mathds{1}_{\Fo}$.
Then
$\{\rho[\lambda,p]^{-1}g: (\lambda,p)\in J\times P\}$ is a discrete frame
in $L^2(\M_n(\RR))$, where $J$ is defined in Notation \ref{notation:cube-J}.
Moreover, the lower and upper frame bounds are given by%
\[
  |R|\,\|f\|_2^2
  \leq 
  \sum_{k\in P}\sum_{{\gamma}\in J}\left|\langle f,\rho[{\gamma},k]^{-1}g\rangle_{L^2(\M_n(\RR))}\right|^2
  \leq
  M|R| \,\|f\|_2^2,
\] 
with $M$ as in Notation \ref{notation:Ip}.
\end{theorem}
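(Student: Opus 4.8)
The plan is to pass to the Fourier (Plancherel) domain, recognize each frame coefficient $\langle f,\rho[\gamma,k]^{-1}g\rangle$ as a Fourier coefficient against the orthonormal basis $\{e_\gamma\}_{\gamma\in J}$ of $L^2(R)$, apply Parseval over $J$ for each fixed $k$, and then collapse the double sum to a single pointwise estimate governed by the tiling conditions.

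First I would use the intertwining relation $\pi[x,h]\mathcal P=\mathcal P\rho[x,h]$ and the unitarity of $\mathcal P$ to write $\langle f,\rho[\gamma,k]^{-1}g\rangle=\langle \widehat f,\pi[\gamma,k]^{-1}\widehat g\rangle$. Substituting $[\gamma,k]^{-1}=[-k^{-1}\gamma,k^{-1}]$, the explicit formula for $\pi$, and the action $h\cdot\chi_b=\chi_{bh^{-1}}$, I would express this inner product as an integral over $\widehat A\cong\M_n(\RR)$ and then make the change of variables $c=bk^{-1}$ (Jacobian $|\det k|^n$). The effect is to rewrite the coefficient as
\[
\langle f,\rho[\gamma,k]^{-1}g\rangle=\int_R F_k(c)\,e^{-2\pi i\,\text{tr}(c\gamma)}\,dc,\qquad F_k(c):=|\det k|^{n/2}\,\widehat f(ck)\,\overline{\widehat g(c)},
\]
where $F_k$ is supported in $\Fo\subseteq R$ since $\widehat g$ vanishes off $\Fo$. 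Comparing with $e_\gamma(y)=|R|^{-1/2}\mathds 1_R(y)e^{2\pi i\,\text{tr}(\gamma y)}$ shows exactly that $\langle f,\rho[\gamma,k]^{-1}g\rangle=\sqrt{|R|}\,\langle F_k,e_\gamma\rangle_{L^2(R)}$.

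Since $\{e_\gamma\}_{\gamma\in J}$ is an orthonormal basis of $L^2(R)$ and $F_k\in L^2(R)$, Parseval's identity gives, for each fixed $k$,
\[
\sum_{\gamma\in J}\bigl|\langle f,\rho[\gamma,k]^{-1}g\rangle\bigr|^2=|R|\,\|F_k\|_{L^2(R)}^2=|R|\,|\det k|^n\int_{\M_n(\RR)}|\widehat f(ck)|^2\,|\widehat g(c)|^2\,dc.
\]
Summing over $k\in P$ and substituting $d=ck$ in each summand (whose Jacobian $|\det k|^{-n}$ cancels the factor $|\det k|^n$) collapses the expression to
\[
\sum_{k\in P}\sum_{\gamma\in J}\bigl|\langle f,\rho[\gamma,k]^{-1}g\rangle\bigr|^2=|R|\int_{\M_n(\RR)}|\widehat f(d)|^2\,G(d)\,dd,\qquad G(d):=\sum_{k\in P}|\widehat g(dk^{-1})|^2.
\]

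It then remains to show $1\le G(d)\le M$ for almost every $d$, which I expect to be the crux. The hypothesis $\mathds 1_{\overline F}\le\widehat g\le\mathds 1_{\Fo}$ forces $0\le\widehat g\le 1$ with $\widehat g\equiv 1$ on $\overline F$, hence the pointwise sandwich $\mathds 1_{\overline F}\le|\widehat g|^2\le\mathds 1_{\Fo}$. For the lower bound, $G(d)\ge\sum_{k\in P}\mathds 1_{\overline F}(dk^{-1})=\#\{k\in P:d\in\overline F\cdot k\}$, which is at least $1$ for a.e. $d$ by tiling condition (ii). For the upper bound, $G(d)\le\sum_{k\in P}\mathds 1_{\Fo}(dk^{-1})=\#\{k\in P:d\in\Fo\cdot k\}$; for a.e. $d$ there exists by (ii) some $p$ with $d\in\overline F\cdot p$, and every $k$ with $d\in\Fo\cdot k$ then satisfies $\overline F\cdot p\cap\Fo\cdot k\neq\emptyset$, i.e.\ $k\in I_{\Fo}(p)$, so the count is at most $|I_{\Fo}(p)|\le M$. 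Inserting $1\le G(d)\le M$ into the identity above and using $\|\widehat f\|_2=\|f\|_2$ yields the claimed bounds. The routine steps are the two changes of variables and the Parseval/Fubini manipulations; the genuinely delicate points are keeping the determinant weights coming from the $\pi$-action consistent in the first step, and translating the measure-theoretic tiling conditions into the pointwise count bounded by $M$ in the last step.
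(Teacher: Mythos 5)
Your proof is correct and follows essentially the same route as the paper's: the paper states the key identity $\sum_{k\in P}\sum_{\gamma\in J}|\langle f,\rho[\gamma,k]^{-1}g\rangle|^2=|R|\int_{\M_n(\RR)}|\widehat f(\chi_b)|^2\bigl(\sum_{k\in P}|\widehat g(\chi_{bk^{-1}})|^2\bigr)\,db$, citing \cite{BeT} for exactly the computation you carry out (intertwining with $\pi$, change of variables, Parseval against $\{e_\gamma\}_{\gamma\in J}$), and then bounds the inner sum between $1$ and $M$ using the tiling conditions just as you do. Your explicit counting argument for the upper bound via $I_{\Fo}(p)$ is a slightly more detailed justification of the paper's one-line remark, but it is the same argument.
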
 
\begin{proof}
For an arbitrary $f\in L^2(\M_n(\RR))$, we have 
\begin{eqnarray*}
\sum_{k\in P}\sum_{{\gamma}\in J}\left|\langle f, \rho[{\gamma},k]^{-1}g\rangle_{L^2(\M_n(\RR))}\right|^2
&=&|R|\int\limits_{\M_n(\RR)} |\widehat{f}( \chi_{b})|^2 \left (\sum_{k\in P}|\widehat{g}(\chi_{bk^{-1}})|^2 \right) db,
\end{eqnarray*}
where the details follow as in \cite{BeT}, and so we omit them here.
Note that for almost every $b\in \M_n(\RR)$, there exists an element $k\in P$ for which $b\in \overline{F}\cdot k$. This, together with the fact that $\widehat{g}\geq \mathds{1}_{\overline{F}}$, implies that $\sum_{k\in P}|\widehat{g}(\chi_{bk^{-1}})|^2\geq 1$ for almost every $b\in \M_n(\RR)$. Thus, $|R|$ is a lower bound for the frame. 
To obtain the upper bound, we note that $\widehat{g}\leq 1$. Moreover, for almost every $b\in\M_n(\RR)$, there are at most $M$ values of $k\in P$ for which $|\widehat{g}(\chi_{bk^{-1}})|>0$, as $\widehat{g}\leq \mathds{1}_{\Fo}$. This finishes the proof.
\end{proof}

{\color{\ke}
\noindent
\begin{remark}
\label{refremark}
 By Theorem 2.9 in \cite{Fuhr-coorbit}, our construction in Theorem \ref{theorem:frame} produces concrete examples of (Banach) frames in the sense of coorbit theory,  provided that $\widehat g$ is chosen to be a bandlimited Schwartz function.
Namely,  standard results of coorbit theory (cf. \cite{CoorbitAtomicDecomposition:FG,FG89}) can  be applied to such a function $g$ to show that the proposed discretization of Equation \eqref{eqn:General Reconstruction Formula} gives rise not only to a frame for $L^2(\mathbb R^{n^2})$, but also atomic decompositions for an entire family of coorbit spaces (converging in the corresponding coorbit space norms), with explicit control over the frame condition number.
 Here, we note some key differences and similarities between our construction and those obtained using the general theory of coorbits.
\begin{itemize}
\item[(i)]
We find the explicit sampling set $P\times J$ through careful and direct consideration of the group $G_n$. 
This construction results in instances of frames that cannot be obtained from the general coorbit theory, since our analyzing vector $g$ in Theorem \ref{theorem:frame} does not need to satisfy conditions of ``better vectors'' (i.e. vectors  corresponding to a certain Weiner-type space) from coorbit theory. 
As we do not impose any smoothness criteria on $g$, we have more freedom in choosing a mother wavelet.
In fact, any function satisfying the support condition $\mathds 1_F\leq \widehat g \leq \mathds 1_{\Fo}$ can serve as an analyzing wavelet for our construction.
One noteworthy example is the case $\widehat g=\mathds 1_F$, for which the proof of Theorem \ref{theorem:frame} shows our construction results in a tight discrete frame.
  
%
\item[(ii)]
Specific properties of the sampling set $P\times J$ enable us to provide explicit frame bounds in our proof.
If $\widehat g$ is a bandlimited Schwartz function, 
results of coorbit theory guarantee that for small enough $U$, the set $P\times J$ may be replaced with any well-spread $U$-dense sampling set.
However, the general theory provides conservative estimates on the required density; see for example conditions on $U$ in Theorem 5.3 and 5.5 of \cite{DescribingFunctions:G}.
This can make finding explicit constructions of sampling sets a challenging task, even for particular groups such as those studied in this article.
Moreover, coorbit theory would not provide the explicit frame bounds calculated in Section \ref{sec:ComputationAndExample}.
\item[(iii)] In \cite{1905.04934}, improved density criteria and frame bound estimates (arising from coorbit theory) for affinely generated systems are obtained. These results again rely on the analyzing wavelet being a ``better'' vector in the sense of coorbit theory. In the special case of a bandlimited Schwartz analyzing wavelet, our results overlap with those of \cite{1905.04934}, except our frame bounds are calculated explicitly in terms of the tiling system.
\end{itemize}
\end{remark}}
%
\section{Tiling System for \texorpdfstring{$\GL_n(\RR)$}{GLn(R)}}\label{sec:GeneralTilingSystem}
In this section, we generalize the construction of a tiling system for $\GL_2(\RR)$ given in \cite{GST} to a tiling system  for $\GL_n(\RR)$. We then show this construction meets
the conditions of Theorem \ref{theorem:frame}. We compute the corresponding frame bounds in Section \ref{sec:ComputationAndExample}.

The definition of our tiling system in this section and the computations thereafter are inspired by the Iwasawa decomposition for $\GL_n(\RR)$.
For matrices, this decomposition is equivalent to the well-known Gram decomposition of a matrix;
with the upper triangular part further decomposed into a diagonal matrix with positive entries
and a unit upper triangular matrix.
Finally, we factor out the determinant to use as a parameter in our tiling system.
While this realization works for matrix groups, the Iwasawa decomposition is much more general, and could be applied to other semisimple Lie groups.
This  decomposition can be viewed as a change of variables
when computing the Haar measure, for which we provide the relevant formulas here as well.

\begin{theorem}[{\cite[Proposition 2.3]{Spherical:Jorgenson}}]
\label{thm:IwasawaHaarMeas}
Each $a\in \GL_n(\RR)$ can be uniquely decomposed as $a=skwy$. Here, $k\in \On$, $s\in \RR^{+}$, $w\in \Dn$, and $y\in \Tn$; where $\On$ is  the orthogonal group in dimension $n$,  $\Dn$ is the group of diagonal matrices with positive diagonal entries and determinant 1, and $\Tn$ is the group of unit upper triangular matrices. That is%
\[
\GL_n(\RR) = 
\left\{
s
k
\begin{pmatrix}
  w_1 & & & \\
    & \ddots & & \\
    & & w_{n-1} & \\
    & & & \prod_{i=1}^{n-1} w_i^{-1}
\end{pmatrix}
\begin{pmatrix}
  1 & y_{1,2} & \cdots & y_{1,n} \\
    & \ddots & \ddots & \vdots \\
    & & 1 & y_{n-1,n} \\
    & & & 1
\end{pmatrix}
\,\,\middle|\,\,
\begin{aligned}
  s, w_i \in \RR^+ \\
  y_{i,j}\in \RR\\
  k\in \On
\end{aligned}
\right\}.
\]%
This is known as Iwasawa decomposition. 
Moreover, the Haar measure of $E\subseteq \GL_n(\RR)$ can be computed in terms of the Euclidean coordinates in this decomposition as%
\[
  \lambda_{\GL_n(\RR)}(E)
  =
  \int\limits_{\mathclap{ \RR^+\times\On\times \Dn \times \Tn}}
  \mathds{1}_E(skwy) s^{-1}
  \prod_{i=1}^{n-1} w_i^{2(n-i)-1} \; dk \; ds \; \prod_{i=1}^{n-1}dw_i \; \prod_{i< j} dy_{i,j},
\]
where $dk$ is the normalized Haar measure on $\On$, and $ds, dw_i, dy_{i,j}$ are Lebesgue measure on $\RR$. 
\end{theorem}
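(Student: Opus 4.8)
The plan is to prove the two assertions --- uniqueness of the decomposition and the Haar measure formula --- separately, obtaining the first from the $QR$ (Gram--Schmidt) factorization and the second from the way Haar measure splits across the compact factor $\On$. For the decomposition I would start from the $QR$ factorization: every $a\in\GL_n(\RR)$ can be written uniquely as $a=k\,r$ with $k\in\On$ and $r$ upper triangular with strictly positive diagonal entries. Existence is Gram--Schmidt applied to the columns of $a$; uniqueness follows because if $k_1 r_1=k_2 r_2$ then $k_2^{-1}k_1=r_2 r_1^{-1}$ is simultaneously orthogonal and upper triangular with positive diagonal, hence the identity. I would then peel $r$ apart in two further unique steps: write $r=Dy$, where $D=\mathrm{diag}(r_{11},\dots,r_{nn})$ collects the (positive) diagonal and $y=D^{-1}r\in\Tn$; then factor the scale out of $D$ by setting $s=\bigl(\prod_i r_{ii}\bigr)^{1/n}\in\RR^+$ and $w=s^{-1}D\in\Dn$ (so $\det w=1$). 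Since $s$ is a scalar it commutes with $k$, giving $a=s\,k\,w\,y$, and each step is individually unique, so the map $\RR^+\times\On\times\Dn\times\Tn\to\GL_n(\RR)$ is a bijection, indeed a diffeomorphism.

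For the measure formula I would first recast the decomposition as $\GL_n(\RR)=\On\cdot B^{+}$, where $B^{+}=\RR^+\cdot\Dn\cdot\Tn$ is the group of upper triangular matrices with positive diagonal. Because $\GL_n(\RR)$ is unimodular and $\On$ is compact, the standard integration formula for such a factorization gives $\int_{\GL_n(\RR)}f\,d\mu = c\int_{\On}\int_{B^{+}}f(kb)\,d_r(b)\,dk$, where $d_r$ is a \emph{right} Haar measure on $B^{+}$ and $c$ is a normalizing constant. I would justify this directly: the right-hand side is manifestly invariant under left translation by $\On$ (invariance of the normalized $dk$), and invariance under left translation by $B^{+}$ follows from the modular function of $B^{+}$ together with the way the $\On$-part of $b_0^{-1}k$ redistributes under $QR$; uniqueness of Haar measure then forces the identity.

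It then remains to compute $d_r$ on $B^{+}$ in the coordinates $(s,w,y)$. Writing a generic $b\in B^{+}$ with diagonal $(d_1,\dots,d_n)$ and $b_{ij}=d_i y_{ij}$ for $i<j$, a triangularity argument for the Jacobian of left and right multiplication shows that the left Haar measure is $d_\ell \propto \prod_{i}\frac{dd_i}{d_i}\prod_{i<j}dy_{ij}$ and that the modular function is $\Delta_{B^{+}}(b)=\prod_{i=1}^{n}d_i^{\,n+1-2i}$, so $d_r=\Delta_{B^{+}}\,d_\ell$. Substituting $d_i=s w_i$ for $i<n$ and $d_n=s\prod_{i<n}w_i^{-1}$ (which makes the scalar power of $s$ in $\Delta_{B^{+}}$ vanish and converts $\Delta_{B^{+}}$ into $\prod_{i=1}^{n-1}w_i^{2(n-i)}$), and using $\prod_i\frac{dd_i}{d_i}\propto\frac{ds}{s}\prod_{i<n}\frac{dw_i}{w_i}$, the density becomes $s^{-1}\prod_{i=1}^{n-1}w_i^{2(n-i)-1}$; here the exponent $2(n-i)-1$ appears precisely as the modular exponent $2(n-i)$ diminished by the $1$ coming from $\frac{dw_i}{w_i}$. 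Absorbing $c$ into the normalizations of $dk,ds,dw_i,dy_{ij}$ yields the stated formula.

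The main obstacle is the measure-decomposition step across the compact, curved factor $\On$: one cannot simply run a single global change of variables over all $n^2$ coordinates, since the $\On$-directions are not flat and the Jacobian does not separate naively. The honest route is through invariance and the modular function of $B^{+}$, and the delicate point is verifying full left-$\GL_n(\RR)$-invariance of $dk\otimes d_r$ --- that is, controlling how left translation by an element of $B^{+}$ scrambles the $\On$-part via $QR$ --- together with the careful bookkeeping needed to land on the exact exponent $2(n-i)-1$.
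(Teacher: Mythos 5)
Your proposal is correct in substance, and it reaches the theorem by a genuinely different organization of the same underlying tools. For the algebraic part you use QR/Gram--Schmidt plus an elementary peeling of the triangular factor; the paper essentially defers this to the cited Iwasawa decomposition of $\SL_n(\RR)$ and concentrates on the measure. For the measure formula you apply the compact-factor splitting once, to $\GL_n(\RR)=\On\cdot B^{+}$ with $B^{+}=\RR^+\Dn\Tn$, and then compute the \emph{right} Haar measure of $B^{+}$ from its modular function. The paper instead first splits off $\RR^+$ through the determinant homomorphism, then splits $H=\{|\det|=1\}$ as $\Tn\Dn\On$ (placing the compact factor \emph{second}, so that both relevant groups are unimodular and no modular correction appears), computes the semidirect-product module $\delta(w)$ of $\Dn$ acting on $\Tn$ by conjugation, and finally uses the inverse map together with unimodularity of $\GL_n(\RR)$ to reorder $sywk$ into $skwy$. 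Your route avoids both the semidirect-product step and the inversion trick, at the cost of computing the modular function of the full triangular group; your computations check out: the left Haar measure $\prod_i d_i^{-1}dd_i\prod_{i<j}dy_{ij}$, the modular function $\prod_i d_i^{\,n+1-2i}$ (in your convention $d_r=\Delta_{B^{+}}d_\ell$; for $n=2$ both give $d_r=dd_1\,dd_2\,dx/(d_1d_2^2)$), the vanishing of the $s$-power in $\Delta_{B^{+}}$ after substituting $d_i=sw_i$, and the final exponent $2(n-i)-1$.

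The one thin spot is exactly the step you flag as delicate: the identity $\int_{\GL_n(\RR)}f\,d\lambda = c\int_{\On}\int_{B^{+}}f(kb)\,d_r(b)\,dk$. Your proposed ``direct'' verification is not complete as sketched: left translation by $b_0\in B^{+}$ sends $(k,b)$ to $(k',b'b)$ with $b_0k=k'b'$, and pulling $b'$ out of the inner integral produces a factor $\Delta_{B^{+}}(b'(k))^{\pm 1}$ (sign depending on convention) that varies with $k$, so you would still need the nontrivial cocycle identity $\int_{\On}\Delta_{B^{+}}(b'(k))^{\pm 1}\,g(k'(k))\,dk=\int_{\On}g(k)\,dk$ --- precisely the kind of lemma you were hoping to avoid. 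The efficient repair is to cite this splitting rather than reprove it: it is Theorem~2.51 of \cite{Fol}, the very result the paper invokes twice, applied to the closed subgroups $\On$ and $B^{+}$ with trivial (hence compact) intersection; unimodularity of $\GL_n(\RR)$ turns Folland's correction factor $\Delta_{\GL_n}(b)/\Delta_{B^{+}}(b)$ into exactly the density of right Haar measure with respect to left Haar measure on $B^{+}$, i.e.\ into your $d_r$. With that citation in place of the invariance sketch, your argument is complete and correct.
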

\begin{proof}
We include a short proof to be self-contained, even though proofs for similar decompositions can be found in Lie theory literature.
 
 In this proof, we denote the Haar measure of a group $G$ by $\mu_G$. First note that $\det:\GL_n(\RR)\rightarrow \RR^*$ is a group homomorphism, where $\RR^*$ is the multiplicative group of nonzero real numbers. Clearly, $H:=\det^{-1}(\{1,-1\})$ is a closed normal subgroup of $\GL_n(\RR)$, and $\RR^+$ is isomorphic with $\GL_n(\RR)/H$. So by Theorem~2.51 of \cite{Fol}, the Haar measure of $\GL_n(\RR)$ can be decomposed as $d\mu_{\GL_n}(sx)=d\mu_{\RR^+}(s) \, d\mu_{H}(x)$. Next, consider the Iwasawa decomposition 
 $\SL_n(\RR)=\SOn\Dn\Tn$, and note that by taking inverse and allowing matrices with determinant of -1, we can write $H=\Tn\Dn\On$. Since $\On$ and $H$ are unimodular groups, we can apply Theorem~2.51 of \cite{Fol} again, to get the decomposition 
 $d\mu_{H}(ywk)=d\mu_{\On}(k) \, d\mu_{\Tn\Dn}(yw)$. 
 
 Finally, we need to compute $d\mu_{\Tn\Dn}$.  To do so, note that $(yw)(y'w')=(y(wy'w^{-1}))\, (ww')$. Since $\Dn$ normalizes $\Tn$ (i.e. $w^{-1}yw\in \Tn$, whenever $w\in \Dn$ and $y\in \Tn$), we can view $\Tn\Dn$ as a semidirect product $\Tn\rtimes \Dn$, with $w\in \Dn$ acting on $y\in \Tn$ by $w\cdot y=wyw^{-1}$. Therefore, by standard results in semidirect products of groups (e.g. see Section 1.2 of \cite{TaylorKaniuth:book}), we have $d\mu_{\Tn\Dn}(yw)=\delta(w)^{-1}d\mu_{\Tn}(y)\, d\mu_{\Dn}(w)$, where $\delta:\Dn\rightarrow \RR^+$ satisfies 
 $$\int_{\Tn} f(y)d\mu_{\Tn}(y)=\delta(w)\int_{\Tn} f(wyw^{-1}) dy, \quad \mbox{ for every } f\in C_c(\Tn) \mbox{ and } w\in \Dn.$$
One can easily see that $d\mu_{\Dn}(w)=\prod_{i=1}^{n-1}\frac{dw_i}{w_i}$ and $d\mu_{\Tn} (y)=\prod_{i< j} dy_{i,j}$, with matrices $w$ and $y$ as represented in the statement of the theorem. Given that $wyw^{-1}=[y'_{i,j}]\in \Tn$ with 
$y'_{i,j}=\frac{w_iy_{ij}}{w_j}$, we have $\delta(w)=\prod_{1\leq i< j\leq n}\frac{w_j}{w_i}$.  We can compute $\delta$ by a simple counting argument, and using the fact that $w_n=(w_1\cdots w_{n-1})^{-1}$, as follows.
\begin{eqnarray*}
\delta(w)=\prod_{1\leq i< j\leq n}\frac{w_j}{w_i}=\frac{\prod_{1\leq i\leq n}w_i^{i-1}}{\prod_{1\leq i\leq n}w_i^{n-i}}=\prod_{1\leq i\leq n}{w_i^{2i-n-1}}=\prod_{1\leq i\leq n-1}{w_i^{2i-2n}}.
\end{eqnarray*} 
 
 Putting all these together, we obtain the Haar measure of $\GL_n(\RR)$, when the decomposition $\GL_n(\RR)=\RR^+\Tn\Dn\On$ is used:
\[
  \lambda_{\GL_n(\RR)}(E)
  =
  \int\limits_{\mathclap{ \RR^+\times\Tn\times \Dn \times \On}}
  \mathds{1}_E(sywk) 
  \prod_{i=1}^{n-1} w_i^{2i-2n} \; dk \; \frac{ds}{s} \; \prod_{i=1}^{n-1}\frac{dw_i}{w_i} \; \prod_{i< j} dy_{i,j}.
\]
Finally, applying inverse map to the above formula and using unimodularity of $\GL_n(\RR)$, we get
\begin{eqnarray*}
  \lambda_{\GL_n(\RR)}(E)
  &=&
  \int\limits_{\mathclap{ \RR^+\times\On\times \Dn \times \Tn}}
  \mathds{1}_E(s^{-1}k^{-1}w^{-1}y^{-1}) 
  \prod_{i=1}^{n-1} w_i^{2i-2n} \; d{k} \; \frac{d{s}}{s} \; \prod_{i=1}^{n-1}\frac{d{w_i}}{w_i} \; \prod_{i< j} dy_{i,j}\\
  &=&
 \int\limits_{\mathclap{ \RR^+\times\On\times \Dn \times \Tn}}
  \mathds{1}_E(skwy) 
  \prod_{i=1}^{n-1} w_i^{-2i+2n} \;  d{k^{-1}} \; \frac{ds^{-1}}{s^{-1}} \; \prod_{i=1}^{n-1}\frac{d{w_i^{-1}}}{w_i^{-1}} \; \prod_{i< j} d(-y_{i,j}) \\
  &=&
  \int\limits_{\mathclap{ \RR^+\times\On\times \Dn \times \Tn}}
  \mathds{1}_E(skwy) 
  \prod_{i=1}^{n-1} w_i^{-2i+2n} \;  d{k} \; \frac{ds}{s} \; \prod_{i=1}^{n-1}\frac{d{w_i}}{w_i} \; \prod_{i< j} dy_{i,j}.
\end{eqnarray*}
\end{proof}

We now extend the tiling system originally constructed in \cite{GST} for $\GL_2(\RR)$ to $\GL_n(\RR)$.

Let $F$ be the set%
\begin{gather*}
F = \left\{
s
k
\begin{pmatrix}
w_1 & w_1 y_{1,2} & w_1 y_{1,3} & \cdots & w_1 y_{1,n-1} & w_1 y_{1,n} \\
      & w_2 & w_2 y_{2,3} & \cdots & w_2 y_{2,n-1} & w_2 y_{2,n} \\
               && w_3 & \cdots & w_3 y_{3,n-1} & w_3 y_{3,n} \\
                          &&& \ddots & & \vdots \\
                                         &&&& w_{n-1} & w_{n-1} y_{n-1,n} \\
                                            &&&&& \prod_{i=1}^{n-1}w_i^{-1}
\end{pmatrix} \,\, \middle| \,\, 
\begin{aligned}
  s, w_i \in [1,2)\\
  y_{i,j}\in [0,1)\
  k \in \On
\end{aligned}
\right\}
\end{gather*}
and let $P$ be the discrete set%
\[
P = 
\left\{
2^\lambda
\begin{pmatrix}
2^{\kappa_1} & 2^{\kappa_2}\mu_{1,2} & 2^{\kappa_3}\mu_{1,3} & \cdots &2^{\kappa_{n-1}}\mu_{1,n-1} & 2^{\kappa_n}\mu_{1,n} \\
& 2^{\kappa_2} & 2^{\kappa_3}\mu_{2,3} & \cdots &2^{\kappa_{n-1}}\mu_{2,n-1} & 2^{\kappa_n}\mu_{2,n} \\
& & 2^{\kappa_3} & \cdots&2^{\kappa_{n-1}}\mu_{3,n-1}  & 2^{\kappa_n} \mu_{3,n} \\
& & & \ddots &  & \vdots \\
& & & & 2^{\kappa_{n-1}} &2^{\kappa_{n}}\mu_{n-1,n} \\
& & & & & 2^{\kappa_n}  
\end{pmatrix} \,\,\middle|\,\, \lambda, \kappa_i, \mu_{i,j}\in \mathbb{Z} \right\}
\]
where $\kappa_n = -\sum_{i=1}^{n-1} \kappa_i$.

\begin{proposition}\label{prop:OriginalTilingDefinitionSatisfied}
For $F$ and $P$ as above, the following two properties hold:
\begin{enumerate}
   \item $F\cdot p \cap F \cdot q = \emptyset$ for every $p\neq q$ in $P$.
   \item $\bigcup_{p\in P} F \cdot p = \GL_n(\RR)$.
\end{enumerate}
\end{proposition}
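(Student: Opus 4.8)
The plan is to prove the stronger assertion that every $a\in\GL_n(\RR)$ has a \emph{unique} factorization $a=fp$ with $f\in F$ and $p\in P$. This is equivalent to the proposition: existence of the factorization for every $a$ is precisely the covering condition (2), while uniqueness of the factor $p$ is precisely the disjointness condition (1) (given $p$, the factor $f=ap^{-1}$ is automatically unique since $p$ is invertible). So it suffices to set up the equation $a=fp$ and show it has exactly one solution.

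First I would rewrite both sets in the coordinates of Theorem \ref{thm:IwasawaHaarMeas}. A generic $f\in F$ is $f=skwy$ with $s\in[1,2)$, $k\in\On$, $w=\mathrm{diag}(w_1,\dots,w_{n-1},\prod_{i=1}^{n-1}w_i^{-1})\in\Dn$ (all $w_i\in[1,2)$), and $y\in\Tn$ with entries in $[0,1)$. A generic $p\in P$ is $p=2^{\lambda}UD$, where $U\in\Tn$ has integer entries $\mu_{i,j}$ and $D=\mathrm{diag}(2^{\kappa_1},\dots,2^{\kappa_n})\in\Dn$ (the identity $p=2^\lambda UD$ is a direct computation, since $(UD)_{i,j}=\mu_{i,j}2^{\kappa_j}$ reproduces the defining entries of $P$, and $\det D=2^{\sum_i\kappa_i}=1$). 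Because scalar matrices are central, $fp=(s2^{\lambda})\,k\,(wyUD)$. Here $wyUD$ is upper triangular with positive diagonal and, using $\prod_i w_i=1$ and $\sum_i\kappa_i=0$, determinant $1$, so it factors uniquely as $wyUD=\delta Y$ with $\delta\in\Dn$ and $Y\in\Tn$; thus $fp=(s2^{\lambda})\,k\,\delta\,Y$ is itself written in the Iwasawa form of Theorem \ref{thm:IwasawaHaarMeas}.

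Next I would compare this with the unique decomposition $a=s_ak_aw_ay_a$ and match factors one at a time. Uniqueness of the Iwasawa form forces $k=k_a$; then $s2^{\lambda}=s_a$, which determines $\lambda\in\ZZ$ and $s\in[1,2)$ uniquely from $\RR^+=\bigsqcup_{\lambda\in\ZZ}[2^\lambda,2^{\lambda+1})$; and $\delta=w_a$. Since the diagonal of $\delta=wD$ is $(w_i2^{\kappa_i})_i$, writing $w_a=\mathrm{diag}(a_1,\dots,a_n)$ the same dyadic splitting $a_i=w_i2^{\kappa_i}$ for $i\le n-1$ determines $w_i\in[1,2)$ and $\kappa_i\in\ZZ$ uniquely, and the constraints $w_n=\prod_{i=1}^{n-1}w_i^{-1}$, $\kappa_n=-\sum_{i=1}^{n-1}\kappa_i$ then fix the last entries (consistently, because $\prod_i a_i=1$). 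At this point $k,s,\lambda,w,D$ are all pinned down, and only the equation $Y=y_a$ remains. Using $\delta^{-1}w=D^{-1}$ gives $Y=\delta^{-1}(wyUD)=D^{-1}yUD$, so the equation becomes $yU=Dy_aD^{-1}=:Z$: a factorization of the unit upper triangular matrix $Z$ as $y\,U$ with $y$ having entries in $[0,1)$ and $U$ integer entries.

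The crux is this last factorization, which I would resolve by induction on the superdiagonal distance $d=j-i$. For $i<j$ one has $(yU)_{i,j}=y_{i,j}+\mu_{i,j}+\sum_{i<\ell<j}y_{i,\ell}\mu_{\ell,j}$, where every term in the sum involves entries strictly closer to the diagonal, hence already determined at earlier stages of the induction. Setting $c:=Z_{i,j}-\sum_{i<\ell<j}y_{i,\ell}\mu_{\ell,j}$, the unique admissible choice is $y_{i,j}=c-\lfloor c\rfloor\in[0,1)$ and $\mu_{i,j}=\lfloor c\rfloor\in\ZZ$. This recovers $y$ and $U$ uniquely and completes the unique factorization $a=fp$. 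The only genuine subtlety is verifying that this nonlinear coupling between the entries of $y$ and $U$ propagates strictly outward from the diagonal, so that the fractional/integer split can be carried out one superdiagonal at a time with no circular dependence; everything else is bookkeeping built on the uniqueness of the Iwasawa decomposition and the dyadic tiling of $\RR^+$.
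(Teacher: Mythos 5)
Your proof is correct and takes essentially the same route as the paper's: both reduce the proposition to showing that $a = fp$ has a unique solution, handled via uniqueness of the Iwasawa decomposition, the dyadic tiling of $\RR^{+}$ for the scalar and diagonal parameters, and an induction on superdiagonals using the integer/fractional split for the unipotent part. Your conjugation trick $yU = D\,y_a D^{-1}$ merely repackages the paper's entry-wise computation (which carries the factors $w_i 2^{\kappa_j}$ through the same induction), so the mathematical content is identical.
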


\begin{proof}
Again, for simplicity of notation, let 
$w_n=\prod_{i=1}^{n-1} w_i^{-1}$. Now,
to establish the claim, we show that 
for each $a \in \GL_n(\RR)$, the equation%
\begin{equation}\label{eq1}
a = 
s
k
\begin{pmatrix}
w_1 & \cdots & w_1 y_{1,n-1} & w_1 y_{1,n} \\
                          & \ddots & & \vdots \\
                                         && w_{n-1} & w_{n-1} y_{n-1,n} \\
                                            &&& w_n
\end{pmatrix}
2^\lambda
\begin{pmatrix}
2^{\kappa_1} & \cdots &2^{\kappa_{n-1}}\mu_{1,n-1} & 2^{\kappa_n}\mu_{1,n} \\
& \ddots &  & \vdots \\
& & 2^{\kappa_{n-1}} &2^{\kappa_{n}}\mu_{n-1,n} \\
& & & 2^{\kappa_n}  
\end{pmatrix}
\end{equation}
has a unique solution subject to the constraints
$\lambda, \kappa_i, \mu_{i,j}\in \mathbb Z, s, w_i\in [1,2), y_{i,j}\in [0,1)$.

First note that by uniqueness of Iwasawa decomposition, the element $k\in\On$ in the above equation is unique. Moreover, from (\ref{eq1}) we have $|\det(a)|=(s2^\lambda)^n$, and it is easy to see that there are unique $s\in [1,2)$ and $\lambda\in {\mathbb Z}$ which satisfy this equation. Indeed, this is a consequence of the fact that $([1,2), {\mathbb Z})$ is a tiling system for $(0,\infty)$.

Let $a'=\frac{1}{|\det(a)|^{1/n}}k^{-1}a$. (Note that at this point $k$ is uniquely determined.) Clearly, $a'$ is upper triangular with determinant 1, and we are left to show that%
\begin{equation}\label{eq2}
a' = 
\begin{pmatrix}
w_1 & \cdots & w_1 y_{1,n-1} & w_1 y_{1,n} \\
                          & \ddots & & \vdots \\
                                         && w_{n-1} & w_{n-1} y_{n-1,n} \\
                                            &&& w_n
\end{pmatrix}
\begin{pmatrix}
2^{\kappa_1} & \cdots &2^{\kappa_{n-1}}\mu_{1,n-1} & 2^{\kappa_n}\mu_{1,n} \\
& \ddots &  & \vdots \\
& & 2^{\kappa_{n-1}} &2^{\kappa_{n}}\mu_{n-1,n} \\
& & & 2^{\kappa_n}  
\end{pmatrix}
\end{equation}
has a unique solution given the constraints $w_i\in [1,2), y_{i,j}\in [0,1), \kappa_i, \mu_{i,j}\in \ZZ$.
To do so, we denote the product of the
two matrices on the right hand side of (\ref{eq2}) as $z=[z_{i,j}]$, and we observe
that these entries have the form%
\[
z_{i,j} = \begin{cases}\displaystyle
  0, & \textnormal{for } j-i<0, \\
  w_i2^{\kappa_i},& \textnormal{for } j-i=0, \\
  w_i2^{\kappa_j} (y_{i,j}+\mu_{i,j}), & \textnormal{for } j-i=1, \\
  w_i 2^{\kappa_j}(y_{i,j}+\mu_{i,j} 
    + \sum_{k=i+1}^{j-1} y_{i,k}\mu_{k,j}) & \textnormal{for } j-i>1
\end{cases}
\]

We now proceed by an induction-like argument on the diagonals, starting from the main diagonal.
Namely, note that $a_{i,i}'=2^{\kappa_i}w_i$, with constraints $w_i\in [1,2)$ and $\kappa_i\in{\mathbb Z}$, has a unique solution  for each $1\leq i \leq n$.
Moving to the super-diagonal, we likewise solve these equations to find that when $j-i=1$, we have%
\[ \mu_{i,j} = \left\lfloor \frac{2^{-\kappa_j}a_{i,j}'}{w_i} \right\rfloor, 
\quad \textnormal{and }\quad 
y_{i,j} = \frac{2^{-\kappa_j}a_{i,j}'}{w_i} - \left\lfloor \frac{2^{-\kappa_j}a_{i,j}'}{w_i} \right\rfloor. \]%
Finally, the remaining entries when $j-i>1$ can be computed similarly, as follows.%
\[
\mu_{i,j} = 
   \left\lfloor \frac{2^{-\kappa_j}a_{i,j}'}{w_i}-
     \sum_{k=i+1}^{j-1} y_{i,k}\mu_{k,j} 
   \right \rfloor
   , \quad \textnormal{and }\quad 
\]
\[ y_{i,j}  =
   \frac{2^{-\kappa_j}a_{i,j}'}{w_i}- \sum_{k=i+1}^{j-1} y_{i,k}\mu_{k,j}
   - \left\lfloor 
     \frac{2^{-\kappa_j}a_{i,j}'}{w_i}-
     \sum_{k=i+1}^{j-1} y_{i,k}\mu_{k,j} 
   \right\rfloor
\] where all values on the right hand sides are known from previous diagonals. This establishes the proposition.%
\end{proof}

Note that this proves that the pair $(P,\overline{F})$ forms a frame generator in the sense of \cite{BeT}, and the pair $(F,P)$ satisfies the slightly different definition of a tiling system given in \cite{GST}.
We now show that $(F,P)$ also fulfills the new conditions for a tiling system given in Definition \ref{def:TilingSystem}.

\begin{corollary}
For $F$ and $P$ as given in the previous proposition, we have
\begin{enumerate}
\renewcommand{\labelenumi}{{\rm(\roman{enumi})}}
\renewcommand{\theenumi}{{\rm(\roman{enumi})}}
  \item $\lambda_{\GL_n}(\overline F \cdot p \cap \overline F \cdot q) =0$ for every distinct pair $p,q\in P$,
  \item $\lambda_{\GL_n}(\GL_n(\RR)\setminus \bigcup\{\overline F\cdot p \; : \; p\in P\}) =0$.
\end{enumerate}
\end{corollary}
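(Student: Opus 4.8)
The plan is to reduce both assertions to the single fact that the topological boundary $\overline{F}\setminus F$ is $\lambda_{\GL_n}$-null; once this is known, (i) and (ii) follow formally from Proposition \ref{prop:OriginalTilingDefinitionSatisfied}. Statement (ii) is in fact immediate: since $F\cdot p\subseteq \overline{F}\cdot p$ for every $p$, part (2) of Proposition \ref{prop:OriginalTilingDefinitionSatisfied} gives $\bigcup_{p\in P}\overline{F}\cdot p\supseteq \bigcup_{p\in P}F\cdot p=\GL_n(\RR)$, so the exceptional set in (ii) is empty.

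For (i) I would first observe that right translation $R_p\colon x\mapsto xp$ is a homeomorphism of $\GL_n(\RR)$, whence $\overline{F}\cdot p=\overline{F\cdot p}$. Suppose $x\in\overline{F}\cdot p\cap\overline{F}\cdot q$ with $p\neq q$. If $x$ lay in the open set $F\cdot p$, then $F\cdot p$ would be a neighbourhood of $x$ disjoint from $F\cdot q$ by part (1) of Proposition \ref{prop:OriginalTilingDefinitionSatisfied}, contradicting $x\in\overline{F\cdot q}$; hence $x\in(\overline{F}\setminus F)\cdot p$. Thus $\overline{F}\cdot p\cap\overline{F}\cdot q\subseteq(\overline{F}\setminus F)\cdot p$, and since $\GL_n(\RR)$ is unimodular (so $\lambda_{\GL_n}$ is right invariant), $\lambda_{\GL_n}(\overline{F}\cdot p\cap\overline{F}\cdot q)\leq\lambda_{\GL_n}(\overline{F}\setminus F)$. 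It therefore suffices to prove $\lambda_{\GL_n}(\overline{F}\setminus F)=0$.

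To show the boundary is null I would invoke the Iwasawa parametrization. Let $\Psi\colon\RR^{+}\times\On\times\Dn\times\Tn\to\GL_n(\RR)$, $\Psi(s,k,w,y)=skwy$, be the map of Theorem \ref{thm:IwasawaHaarMeas}; by the uniqueness asserted there it is a smooth bijection, and Gram--Schmidt provides a smooth inverse, so $\Psi$ is a diffeomorphism. By construction $F=\Psi(B)$, where $B$ is the parameter box given by $s\in[1,2)$, $w_i\in[1,2)$, $y_{i,j}\in[0,1)$ and $k\in\On$. As a homeomorphism $\Psi$ takes closures to closures, so $\overline{F}\setminus F=\Psi(\overline{B}\setminus B)$. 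Now $\overline{B}\setminus B$ is contained in the finite union of faces obtained by fixing one of the Euclidean coordinates $s,w_i,y_{i,j}$ at an endpoint (the compact factor $\On$ contributes no boundary), and each such face is Lebesgue-null in the $(s,w,y)$-variables. Since the density $s^{-1}\prod_{i=1}^{n-1}w_i^{2(n-i)-1}$ in the measure formula of Theorem \ref{thm:IwasawaHaarMeas} is locally bounded, integrating $\mathds{1}_{\Psi(\overline{B}\setminus B)}$ against it yields $\lambda_{\GL_n}(\overline{F}\setminus F)=0$, which completes the argument.

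I expect the one genuinely delicate point to be the identity $\overline{F}\setminus F=\Psi(\overline{B}\setminus B)$: it relies on $\Psi$ being a global homeomorphism, so that closure and boundary may be computed interchangeably on either side, together with the observation that the compact group $\On$ has empty boundary and so does not enlarge the null set. Everything else is a formal consequence of Proposition \ref{prop:OriginalTilingDefinitionSatisfied}, the unimodularity of $\GL_n(\RR)$, and the absolute continuity of Haar measure recorded in Theorem \ref{thm:IwasawaHaarMeas}.
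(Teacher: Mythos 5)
Your treatment of (ii), and your reduction of (i) to the nullity of $\overline{F}\setminus F$ via right invariance of $\lambda_{\GL_n}$ (unimodularity) and the Iwasawa measure formula of Theorem \ref{thm:IwasawaHaarMeas}, follow the paper's own proof; your boundary computation is a correct and more detailed account of what the paper dismisses as ``easily computed directly.'' However, one step is genuinely wrong: you call $F\cdot p$ an open set and deduce from this that every point of $\overline{F}\cdot p\cap\overline{F}\cdot q$ lies in $(\overline{F}\setminus F)\cdot p$. The set $F$ of Proposition \ref{prop:OriginalTilingDefinitionSatisfied} is \emph{not} open: under the Iwasawa homeomorphism it corresponds to the parameter box with half-open intervals $s,w_i\in[1,2)$, $y_{i,j}\in[0,1)$, and points with, say, $s=1$ are not interior points. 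Moreover, the asymmetric containment you derive from openness actually fails for this $(F,P)$: take $p=I_n$, $q=2^{-1}I_n\in P$ (the element with $\lambda=-1$ and all $\kappa_i=\mu_{i,j}=0$), and $x=kwy\in F$ with $s$-coordinate equal to $1$. Then $x\in F\cdot p$, so $x\notin(\overline{F}\setminus F)\cdot p$; yet $2x$ has $s$-coordinate $2$, hence $2x\in\overline{F}\setminus F$ and $x=(2x)q\in\overline{F}\cdot q$. Thus $x\in\overline{F}\cdot p\cap\overline{F}\cdot q$ while $x\notin(\overline{F}\setminus F)\cdot p$.

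The repair is the symmetric form of your containment, which is exactly what the paper uses: since $F\cdot p\cap F\cdot q=\emptyset$, a point of $\overline{F}\cdot p\cap\overline{F}\cdot q$ cannot lie in both $F\cdot p$ and $F\cdot q$, so
\[
\overline{F}\cdot p\cap\overline{F}\cdot q\subseteq\bigl((\overline{F}\setminus F)\cdot p\bigr)\cup\bigl((\overline{F}\setminus F)\cdot q\bigr),
\]
and therefore $\lambda_{\GL_n}(\overline{F}\cdot p\cap\overline{F}\cdot q)\leq 2\,\lambda_{\GL_n}(\overline{F}\setminus F)=0$, by right invariance together with your (correct) computation that the boundary is null. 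With this one-line change your argument is complete and coincides with the paper's proof.
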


\begin{proof}
Property (ii) is immediate, as $\GL_n(\RR)=\bigcup\{F\cdot p \;:\; p\in P\}\subseteq \bigcup \{\overline F\cdot p \;:\;p\in P\} \subseteq \GL_n(\RR)$. For the first property, note that by Proposition \ref{prop:OriginalTilingDefinitionSatisfied}, $F\cdot p\cap F\cdot q=\emptyset$ if $p\neq q \in P$. So,
\[ \begin{split}
 \lambda_{\GL_n(\RR)}(\overline{F}\cdot p \cap \overline F \cdot q) 
&= \lambda_{\GL_n(\RR)}((\overline{F}\setminus F)\cdot p \cap \overline F\cdot q) \cup(\overline{F}\setminus F)\cdot q \cap \overline F\cdot p) )\\ 
& \leq  \lambda_{\GL_n(\RR)}((\overline{F}\setminus F)\cdot p)+ \lambda_{\GL_n(\RR)}((\overline{F}\setminus F)\cdot q)\\
& = \lambda_{\GL_n{(\RR)}}(\overline F \setminus F) + \lambda_{\GL_n{(\RR)}}(\overline F \setminus F) \\
& = 0,
\end{split} \]
where the second to last equality follows as $\GL_n(\RR)$ is unimodular, and the last equality can easily be computed directly by Theorem \ref{thm:IwasawaHaarMeas}.
\end{proof}

\paragraph{An Open Set $\Fo\supset \overline F$.}
Fix $\epsilon>0$.
Let $\Fo$ be the set%
\[
\Fo = \left\{
s
k
\begin{pmatrix}
w_1 & w_1 y_{1,2} & w_1 y_{1,3} & \cdots & w_1 y_{1,n-1} & w_1 y_{1,n} \\
      & w_2 & w_2 y_{2,3} & \cdots & w_2 y_{2,n-1} & w_2 y_{2,n} \\
               && w_3 & \cdots & w_3 y_{3,n-1} & w_3 y_{3,n} \\
                          &&& \ddots & & \vdots \\
                                         &&&& w_{n-1} & w_{n-1} y_{n-1,n} \\
                                            &&&&& \prod_{i=1}^{n-1}w_i^{-1}
\end{pmatrix} \,\, \middle| \,\, 
\begin{aligned}
  s, w_i \in (1-\epsilon,2+\epsilon), \\
  y_{i,j}\in (-\epsilon,1+\epsilon), \\
  k \in \On
\end{aligned}
\right\}.
\]

\begin{claim}
$\Fo$ is an open set in $\GL_n(\RR)$ such that $\overline F \subseteq \Fo$.
\end{claim}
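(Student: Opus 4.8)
The plan is to realize both $F$ and $\Fo$ as images of explicit product sets under the Iwasawa parametrization, and then transfer the two topological assertions through that map. Set
\[
X = \RR^+\times \On\times (\RR^+)^{n-1}\times \RR^{\binom{n}{2}},
\]
and let $\Theta\colon X\to \GL_n(\RR)$ send $(s,k,(w_i),(y_{i,j}))$ to the matrix $skwy$ of Theorem \ref{thm:IwasawaHaarMeas}, where $w=\mathrm{diag}(w_1,\dots,w_{n-1},\prod_i w_i^{-1})$ and $y$ is the unit upper triangular matrix with entries $y_{i,j}$. Then $F=\Theta(C)$ and $\Fo=\Theta(U)$ (with the $s$- and $w_i$-ranges intersected with $\RR^+$ when $\epsilon\ge 1$), where
\[
C=[1,2)\times\On\times[1,2)^{n-1}\times[0,1)^{\binom{n}{2}},\qquad U=(1-\epsilon,2+\epsilon)\times\On\times(1-\epsilon,2+\epsilon)^{n-1}\times(-\epsilon,1+\epsilon)^{\binom{n}{2}}.
\]

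The first and only substantial step is to show that $\Theta$ is a homeomorphism. Bijectivity is exactly the uniqueness of the decomposition $a=skwy$ recorded in Theorem \ref{thm:IwasawaHaarMeas}, combined with the fact that $(w_i)\mapsto w$ and $(y_{i,j})\mapsto y$ are bijections onto $\Dn$ and $\Tn$; continuity of $\Theta$ is clear from continuity of matrix multiplication. For the inverse I would observe that $a=skwy=k\cdot(swy)$ is precisely the $QR$-decomposition of $a$, with $k$ orthogonal and $R:=swy$ upper triangular with positive diagonal. Gram--Schmidt produces $k$ and $R$ as continuous (indeed real-analytic) functions of $a$ on all of $\GL_n(\RR)$, and the remaining parameters are then recovered continuously via $s=(\det R)^{1/n}$, $w_i=R_{i,i}/s$, and $y_{i,j}=R_{i,j}/R_{i,i}$; hence $\Theta^{-1}$ is continuous. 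This continuity of the inverse parametrization is the main point; everything afterward is formal.

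Granting that $\Theta$ is a homeomorphism, openness of $\Fo$ is immediate: $U$ is open in $X$, since each interval is open in $\RR^+$ or in $\RR$ and $\On$ is open in itself, so $\Fo=\Theta(U)$ is open in $\GL_n(\RR)$. For the inclusion $\overline F\subseteq\Fo$ I would use that a homeomorphism commutes with closure, giving $\overline F=\overline{\Theta(C)}=\Theta(\overline C)$, the closure $\overline C$ being taken in $X$. Here it is essential that this closure is computed in $\RR^+$ rather than in $\RR$, so that $\overline{[1,2)}=[1,2]$ and $\overline{[0,1)}=[0,1]$ while $\On$ is already compact, yielding
\[
\overline C=[1,2]\times\On\times[1,2]^{n-1}\times[0,1]^{\binom{n}{2}}.
\]
Since $[1,2]\subseteq(1-\epsilon,2+\epsilon)$ and $[0,1]\subseteq(-\epsilon,1+\epsilon)$ for every $\epsilon>0$, we have $\overline C\subseteq U$ coordinatewise, and injectivity of $\Theta$ promotes this to $\overline F=\Theta(\overline C)\subseteq\Theta(U)=\Fo$, completing the plan.
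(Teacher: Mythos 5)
Your proof is correct, and its skeleton is the same as the paper's: both arguments reduce the claim to the fact that the Iwasawa parametrization $(s,k,w,y)\mapsto skwy$ is a homeomorphism from $\RR^+\times\On\times\Dn\times\Tn$ onto $\GL_n(\RR)$, so that open parameter sets map to open subsets of $\GL_n(\RR)$. The difference is in how that fact is obtained and how much is left implicit. The paper dismisses the inclusion $\overline F\subseteq\Fo$ as clear and, for openness, cites the diffeomorphism $\SOn\times\Dn\times\Tn\to\SL_n(\RR)$ (Proposition 1.6.2 of \cite{Basic-Lie:Abbaspour-Moskowitz}); you instead prove continuity of the inverse parametrization by hand, noting that $a=k\,(swy)$ is precisely the QR decomposition of $a$, so Gram--Schmidt recovers $k$ and $R=swy$ (hence $s=(\det R)^{1/n}$, $w_i=R_{i,i}/s$, $y_{i,j}=R_{i,j}/R_{i,i}$) continuously, and you then make the inclusion rigorous by transporting closures through the homeomorphism, using that the closed box $[1,2]\times\On\times[1,2]^{n-1}\times[0,1]^{\binom{n}{2}}$ sits inside your open box $U$. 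The paper's route is shorter and leans on standard Lie theory; yours is self-contained and elementary, and as a byproduct it identifies $\overline F$ explicitly as the continuous image of a compact parameter box, which also confirms that $F$ is relatively compact in $\GL_n(\RR)$, as Definition \ref{def:TilingSystem} requires. One cosmetic remark: your caution about taking closures ``in $\RR^+$ rather than in $\RR$'' is moot for these particular intervals, since $\overline{[1,2)}=[1,2]$ either way; the place where the ambient space genuinely matters is that $\overline F$ denotes closure in $\GL_n(\RR)$ rather than in $\M_n(\RR)$, and your homeomorphism argument respects this automatically.
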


\begin{proof}
The inclusion $\overline F\subseteq \Fo$ is clear.
To prove that $\Fo$ is open in $\GL_n(\RR)$, note that the multiplication map 
$$ \SOn\times\Dn\times\Tn\rightarrow \SL_n(\RR)$$
is a diffeomorphism (see Proposition 1.6.2 of \cite{Basic-Lie:Abbaspour-Moskowitz}). So, $\RR^+\times \On\times\Dn\times\Tn\rightarrow \GL_n(\RR)$ is a homeomorphism, and maps any open set to an open subset of $\GL_n(\RR)$. 
\end{proof}

Summarizing the results from this section, we have constructed a tiling system $(F,P)$ and determined how to compute the measure of a set $E$ through the coordinates of the Iwasawa decomposition, which we used to motivate the tiling. We now use these results to compute frame bounds for the discretization of the continuous wavelet transform.

\section{Computation of Frame Bounds}
\label{sec:ComputationAndExample}
In this section, we find bounds for the value $M$ (as defined for Theorem \ref{theorem:frame}) associated with  the sets $P,F,$ and $\Fo$.
Recall that $M$ is just the least uniform upper bound for the number
of $q\in P$ such that for a fixed $p\in P$, $\overline F\cdot p$
and $\Fo\cdot q$ have nontrivial intersection. 
Once we have
done this for general $n$, we provide a concrete example
with all details when $n=2$.
\begin{notation}
From this point forward, we use ${\rm diag}(w_1,\ldots,w_n)$ to denote an $n\times n$ diagonal matrix with diagonal entries $w_1,\ldots,w_n$.
\end{notation}
\begin{proposition}\label{prop:boundOnM}
For $F$, $P$, and $\Fo$ as defined before,  $M$ as in Notation \ref{notation:Ip}, and  $0<\epsilon\leq \frac{1}{2}$, we have
\[ M =\sup_{p\in P}|\{k\in P:\ \overline{F}\cdot p \cap \Fo\cdot k\neq \emptyset\}|  \leq 3^n 6^{\frac{n(n-1)}{2}}.\]
\end{proposition}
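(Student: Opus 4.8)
The plan is to turn the nonemptiness condition into a matrix identity and then count the admissible $k$ coordinate by coordinate, using the Iwasawa coordinates that underlie the tiling. First I would fix $p\in P$ and note that $\overline F\cdot p\cap \Fo\cdot k\neq\emptyset$ is equivalent to $kp^{-1}\in \Fo^{-1}\overline F$. Writing $\overline F=\On\cdot \overline S$ and $\Fo=\On\cdot S_o$, where $S,S_o$ are the upper-triangular (positive-diagonal) factors, and using uniqueness of the Iwasawa/$QR$ decomposition together with the fact that an orthogonal upper-triangular matrix with positive diagonal must be the identity, the orthogonal factor drops out. Thus it suffices to count those $k\in P$ for which there exist $s,s',W_1,W_2,Y_1,Y_2$ (in the ranges prescribed by $\overline F$ and $\Fo$) solving $s\,W_1Y_1V_p\tilde D_p=s'\,W_2Y_2V_k\tilde D_k$, where $p=V_p\tilde D_p$ and $k=V_k\tilde D_k$ with $V_\bullet\in\Tn(\ZZ)$ integer unit upper triangular and $\tilde D_\bullet$ diagonal with power-of-two entries.

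Next I would separate the diagonal from the unipotent data. Comparing determinants gives $s/s'=2^{\lambda-\lambda_p}$, and since $s\in[1,2]$, $s'\in(1-\epsilon,2+\epsilon)$ with $\epsilon\le\tfrac12$, this forces $\lambda-\lambda_p\in\{-1,0,1\}$ (three choices). Comparing the $i$-th diagonal entries gives $w^1_i/w^2_i=2^{\delta_i}$ with $\delta_i:=\kappa^k_i-\kappa^p_i$, and for $i<n$ the ranges of $w^1_i,w^2_i$ force $\delta_i\in\{-1,0,1\}$ (three choices each), while $\delta_n=-\sum_{i<n}\delta_i$ is then determined and automatically consistent with the last diagonal equation. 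Hence there are at most $3\cdot 3^{\,n-1}=3^n$ admissible diagonal configurations. Cancelling the diagonal factors reduces the remaining (off-diagonal) content to the single clean identity $V_k=D_\delta\,(U_gV_p)\,D_\delta^{-1}$, i.e. $\mu^k_{i,j}=2^{\delta_i-\delta_j}(U_gV_p)_{i,j}$ for $i<j$, where $D_\delta={\rm diag}(2^{\delta_1},\dots,2^{\delta_n})$ and $U_g$ is the unit-upper-triangular factor of $kp^{-1}$, whose entries range over a fixed relatively compact set depending only on $\epsilon$.

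Finally I would count the integer matrices $V_k$ by sweeping the super-diagonals $j-i=1,2,\dots,n-1$ outward. The key is that once the entries $\mu^k_{i,l}$ with $l<j$ are fixed, the triangular identity above uniquely determines the entries $(U_g)_{i,l}$ with $l<j$, so that $\mu^k_{i,j}=2^{\delta_i-\delta_j}(U_g)_{i,j}+R_{i,j}$ with $R_{i,j}$ a fixed real number absorbing all the large, $p$-dependent terms $\mu^p_{l,j}$. Thus $\mu^k_{i,j}$ is confined to an interval whose length is $2^{\delta_i-\delta_j}$ times the conditional range of the single coordinate $(U_g)_{i,j}$. For interior coordinates $i,j<n$ one has $2^{\delta_i-\delta_j}\le 4$, and $\epsilon\le\tfrac12$ keeps this window short enough to contain at most $6$ integers; carrying this through the $\tfrac{n(n-1)}2$ coordinates yields the factor $6^{n(n-1)/2}$ and hence $M\le 3^n6^{n(n-1)/2}$.

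The hard part will be that last step, and specifically the last column. The dyadic ratio $2^{\delta_i-\delta_j}$ is bounded by $4$ only when $i,j<n$; for $j=n$ the determined exponent $\delta_n=-\sum_{i<n}\delta_i$ can be as large as $n-1$, so a naive one-coordinate-at-a-time estimate would allow windows far larger than $6$ (already visible at $n=3$ for the configuration $\delta_1=\delta_2=1$). Closing the gap requires using the \emph{joint} integrality of $V_k$ together with the way the column-$n$ parameters couple the entries $(U_g)_{i,n}$ of a common column, so that the admissible configurations in the last column do not multiply freely; the extra slack created by replacing $\overline F$ with its $\epsilon$-enlargement $\Fo$ is exactly what must be shown not to inflate the count beyond the clean constant $6$ per coordinate, uniformly in $p$. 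Verifying this uniform bound — rather than one degrading with $n$ or with the size of $p$ — is the technical crux, and is where the precise hypothesis $0<\epsilon\le\tfrac12$ is used.
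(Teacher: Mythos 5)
Your proposal is, up to notation, the same argument as the paper's own proof: uniqueness of the Iwasawa/QR factorization removes the orthogonal factor, the determinant gives three choices of $\lambda'$, the diagonal equations give three choices of each $\kappa_i'$ for $i<n$ with $\kappa_n'$ determined (and, as you correctly note, the $n$-th diagonal equation then holds automatically), and the unipotent part is counted diagonal by diagonal, each $\mu_{i,j}'$ being confined to an interval of length $2^{-p_{i,j}}+1+2\epsilon$, where $p_{i,j}=(\kappa_j'-\kappa_j)-(\kappa_i'-\kappa_i)$. However, your proposal is not a proof: you explicitly leave the last column uncounted, and that is a genuine gap, not a deferrable technicality. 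What you could not have known is that the published proof contains exactly this gap, hidden in one sentence: the paper asserts $p_{i,j}\in\{-2,-1,0,1,2\}$ for \emph{all} $i<j$, which presumes $\kappa_n'-\kappa_n\in\{-1,0,1\}$; but since $\kappa_n'-\kappa_n=-\sum_{i<n}(\kappa_i'-\kappa_i)$ and the $n$-th diagonal equation imposes no constraint, one can have $\kappa_n'-\kappa_n=-(n-1)$ and hence $p_{i,n}=-n$, precisely the configuration you flagged. For $n=2$ both arguments are complete and give $M\le 54$; for $n\ge 3$ neither is.

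Moreover, the repair you sketch cannot work. In the equation for the entry $(i,n)$, namely $\mu_{i,n}'+y_{i,n}'=2^{-p_{i,n}}y_{i,n}+c_{i,n}$ with $c_{i,n}$ fixed by earlier diagonals, the variables $y_{i,n}\in[0,1]$ and $y_{i,n}'\in(-\epsilon,1+\epsilon)$ occur in no other equation; hence \emph{every} integer in the open interval $\left(c_{i,n}-1-\epsilon,\,c_{i,n}+2^{-p_{i,n}}+\epsilon\right)$ is realizable, the count is at least $2^{-p_{i,n}}+1$ regardless of earlier choices, and these choices multiply freely over $i$ — there is no coupling that saves the constant $6$. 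Concretely, taking $\kappa_i'=\kappa_i+1$ for all $i<n$ (so $p_{i,n}=-n$ and $p_{i,j}=0$ for $i<j<n$) produces, for every $p\in P$, at least $2^{\binom{n-1}{2}}\,(2^n+1)^{n-1}$ admissible $p'$ from this single diagonal configuration. Comparing exponents, $\tfrac{(n-1)(n-2)}{2}+n(n-1)$ versus $n\log_2 3+\tfrac{n(n-1)}{2}\log_2 6$, this already exceeds $3^n6^{n(n-1)/2}$ for all $n\ge 14$. So the inequality claimed in Proposition \ref{prop:boundOnM} appears to be false for large $n$: a correct statement must either enlarge the bound (roughly $2^n$ choices on each of the $n-1$ last-column coordinates, then re-summing over the $3^{n-1}$ diagonal configurations), or alter the tiling so that the determinant-one normalization does not concentrate all of the slack in the $n$-th diagonal entry. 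Your instinct about where the difficulty lies was exactly right; your hope for how to resolve it was not.
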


\begin{proof}
If $\overline F \cdot p \cap \Fo \cdot p'\neq \emptyset$ for some $p\neq p'\in P$, then there exist $a\in \overline F$ and $a'\in \Fo$ for which we have $ap=a'p'$, or equivalently%
\begin{equation}\label{eq-first}
a=a'p'p^{-1}.
\end{equation}
Using Theorem \ref{thm:IwasawaHaarMeas}, we write the Iwasawa decompositions $a=skwy$ and $a'=s'k'w'y'$, where $w$ and $w'$ are diagonal matrices ${\rm diag}(w_1,\ldots, w_{n-1}, (w_1\cdots w_{n-1})^{-1})$ and 
${\rm diag}(w'_1,\ldots, w'_{n-1}, (w'_1\cdots w'_{n-1})^{-1})$, and $y=[y_{i,j}]$ and $y'=[y'_{i,j}]$ are unit upper triangular matrices. Suppose $p,p'$ are written in the format of $P$ using $\lambda, \kappa_i, \mu_{i,j}$ and $\lambda', \kappa'_i, \mu'_{i,j}$ respectively.
By the uniqueness of Iwasawa decomposition and equality of the determinants of both sides of the above equation, we get:
\begin{align}
 \label{C1} \tag{C1} k &= k', \\ 
 \label{C2}\tag{C2} s &= s' 2^{\lambda'-\lambda}.
\end{align}
From  Equation (\ref{eq-first}), we get 
\begin{equation}\label{eq-2nd}
wy=w'y' \, [\mu'_{i,j}]\, {\rm diag}(2^{\kappa'_1-\kappa_1},\ldots, 2^{\kappa'_n-\kappa_n}) \, [\mu_{i,j}]^{-1},
\end{equation}
which simplifies to 
\begin{equation*}
wy= \underbrace{w'\, {\rm diag}(2^{\kappa'_1-\kappa_1},\ldots, 2^{\kappa'_n-\kappa_n})}_{\in \Dn}\, \underbrace{{\rm diag}(2^{-\kappa'_1+\kappa_1},\ldots, 2^{-\kappa'_n+\kappa_n})\, y' \, [\mu'_{i,j}]\, {\rm diag}(2^{\kappa'_1-\kappa_1},\ldots, 2^{\kappa'_n-\kappa_n}) \, [\mu_{i,j}]^{-1}}_{\in \Tn},\nonumber
\end{equation*}
where $y,y',[\mu_{i,j}],[\mu'_{i,j}]\in \Tn$ are unit upper triangular matrices , and $\sum_{i=1}^n\kappa_i=\sum_{i=1}^n\kappa'_i=0$ (as in the definition of $P$). Note that in the above equation, we used the fact that $\Dn$ normalizes $\Tn$. 
By uniqueness of Iwasawa decomposition, we have 
$$w=w'\, {\rm diag}(2^{\kappa'_1-\kappa_1},\ldots, 2^{\kappa'_n-\kappa_n}),$$
{ and }
$$[y_{i,j}]={\rm diag}(2^{-\kappa'_1+\kappa_1},\ldots, 2^{-\kappa'_n+\kappa_n})\, [y'_{i,j}] \, [\mu'_{i,j}]\, {\rm diag}(2^{\kappa'_1-\kappa_1},\ldots, 2^{\kappa'_n-\kappa_n}) \, [\mu_{i,j}]^{-1}.$$
Thus we obtain the conditions
\begin{align}
 \label{C3}\tag{C3}w_i'2^{\kappa'_i-\kappa_i} &= w_i & \quad \quad \forall i \\
\label{C4}\tag{C4}2^{(\kappa_j'-\kappa_j)-(\kappa_i'-\kappa_i)}\sum_{k=i}^j y_{i,k}'\mu_{k,j}'&=\sum_{k=i}^jy_{i,k}\mu_{k,j}  & \quad \quad \textnormal{for }j\geq i
\end{align}
by comparing each product entry-wise.

Given a fixed $p\in P$ represented by $\lambda, \kappa_i, \mu_{i,j}$, we count the number of $p'\in P$ with parameters $\lambda', \kappa_i', \mu_{i,j}'$ for which Equation (\ref{eq-2nd}) can be satisfied for some choice of $a\in\overline{F}$ and $a'\in \Fo$. Using condition (\ref{C1}), we see that%
\[
  s' 2^{\lambda'-\lambda} = s \in [1,2].
\]%
As $s'\in(1-\epsilon,2+\epsilon)$, we deduce that $\lambda'-\lambda\in \{-1,0,1\}$ if $\epsilon\leq \frac{1}{2}$. This also shows that $\lambda'-\lambda=-1$ implies $s'\in [2,2+\epsilon)$, $\lambda'-\lambda=0$ implies $s'\in[1,2]$, and finally that $\lambda'-\lambda=1$ implies that $s'\in (1-\epsilon,1]$.
Similarly, we deduce from (\ref{C3}) that $\kappa_i'-\kappa_i\in \{-1,0,1\}$ holds when $\epsilon\leq \frac{1}{2}$. This constrains $w_i'$ to be in $[2,2+\epsilon)$, $[1,2]$ or $(1-\epsilon,1]$ respectively. Also, note that for a given choice of $\kappa_i'\in\{\kappa_i-1,\kappa_i,\kappa_i+1\}$, we have a uniquely determined $w'_i$ given by $w_i'=\frac{w_i}{2^{\kappa'_i-\kappa_i}}$.

From this point forward, we assume that $\lambda'$ and $\kappa'_1,\ldots,\kappa_{n-1}'$ have been chosen according to the above constraints, and the parameters $s'$, $w_i'$ and $\kappa'_n$ have been determined accordingly; recall that $\kappa'_n=-\sum_{i=1}^{n-1}\kappa'_i$.
Let $p_{i,j}=(\kappa_j'-\kappa_j)-(\kappa_i'-\kappa_i)$.  As $\kappa_j'-\kappa_j$, $\kappa_i'-\kappa_i \in \{-1,0,1\}$, it follows that $p_{i,j}\in \{-2,-1,0,1,2\}$.
To count possible solutions for equations, we will make repeated use of the following claim. 
\begin{claim}\label{claim:count}
Let $\epsilon\leq \frac{1}{2}$ be fixed. Then for any interval $[\alpha, \alpha+4]\subset \mathbb R$, there are at most six $\beta\in \mathbb Z$ such that $[\alpha, \alpha+4]\cap \left(\beta-\epsilon, \beta+1+\epsilon\right)\neq \emptyset$. 
\end{claim}
\begin{proof}[Proof of claim.]
Let $m=\min \{\beta\in \mathbb Z\colon [\alpha, \alpha+4]\cap (\beta-\epsilon, \beta+1+\epsilon)\neq \emptyset\}$. Clearly $m$ exists and is finite, as both intervals are bounded and $\mathbb Z$ is discrete. Now, consider $m+k$ for $k\geq 6$. If $(m+k-\epsilon, m+k+1+\epsilon)$ intersects $[\alpha, \alpha+4]$, then we have $(m-\epsilon, m+1+\epsilon)$ intersects $[\alpha-k, \alpha-k+4]$. On the other hand,  by definition of $m$, we know that $(m-\epsilon, m+1+\epsilon)$ intersects $[\alpha, \alpha+4]$ as well. This is a contradiction, because $(m-\epsilon, m+1+\epsilon)$ has length at most $2$, which is not larger than the gap between the above two closed intervals. Therefore, there are at most 6 possible choices for $\beta\in\mathbb Z$ for which we may have  $[\alpha, \alpha+4]\cap \left(\beta-\epsilon, \beta+1+\epsilon\right)\neq \emptyset$.
\end{proof}

We now proceed by an inductive argument, examining diagonals of the matrices on the two sides of Equation (\ref{C4}), starting from the super-diagonal.
For $j=i+1$, condition (\ref{C4}) becomes
\[y_{i,i+1}'+\mu_{i,i+1}'=2^{-p_{i,i+1}}(\mu_{i,i+1}+y_{i,i+1}),\]
with the constraints $y_{i,i+1}'\in (-\epsilon,1+\epsilon)$, $y_{i,i+1}\in [0,1]$ and $\mu_{i,i+1}'\in {\mathbb Z}$. 
Note that $2^{-p_{i,i+1}}\mu_{i,i+1}$ is fixed at this point, and clearly $2^{-p_{i,i+1}}y_{i,i+1}\in [0,4]$.
So by Claim \ref{claim:count}, there are at most six possible choices for $\mu_{i,i+1}'$. This settles the discussion for the superdiagonal. 

Next, by induction hypothesis, assume that for every  $\mu'_{i,j}$ with $j-i<k$, there are at most six possible choices that satisfy Equation (\ref{C4}). Also assume  that  $y_{i,j},y'_{i,j}$ and $\mu'_{i,j}$ have been fixed whenever $j-i<k$ (i.e. for  the first $k-1$ diagonals).
For $j=i+k$,  condition (\ref{C4}) becomes
\begin{equation}\label{eq-final}
y_{i,i+k}'+\mu_{i,i+k}'=2^{-p_{i,i+k}}y_{i,i+k}+\underbrace{\left(2^{-p_{i,i+k}}\sum_{t=i}^{k+i-1}y_{i,t}\mu_{t,{i+k}}-\sum_{t=i+1}^{k+i-1}y'_{i,t}\mu'_{t,{i+k}}\right)}_{\alpha},
\end{equation}
where $\mu_{i,i+k}'\in {\mathbb Z}$, $y_{i,i+k}'\in (-\epsilon,1+\epsilon)$ and $y_{i,i+k}\in [0,1]$. Note that the expression $\alpha$ in the above equation  is fixed, as it only involves values which have already been chosen. So, by Claim \ref{claim:count}, there are at most six possible choices for 
$\mu_{i,i+k}'$. This proves that for every $\mu'_{i,j}$ in Equation (\ref{C4}), there are only six possible values that may satisfy the equation. 

In the following table, we summarize what we have found so far. Note that not every possible solution is necessarily an actual solution; however, any solution for (\ref{C4}) is counted below. 

\begin{table}[!htb]
\centering
{\begin{tabular}{|c|c|c|}
\hline
\mbox{parameter}& \mbox{number of possible choices}& possible solutions in terms of $p$\\
\hline
$\lambda'$&3& $\lambda-1,\lambda, \lambda+1$\\
\hline
$\kappa_i', 1\leq i<n$ & 3 & $\kappa_i-1, \kappa_i,\kappa_i+1$\\
\hline
$\kappa_n'$ & 1& $-\sum_{i=1}^{n-1}\kappa_i'$\\
\hline
$\mu'_{i,j}, i<j$ & 6& solutions to Equation (\ref{eq-final}) satisfying given constraints\\
\hline
\end{tabular}}
\caption{Maximum number of possible choices for each parameter in $p'$.}\label{table:Intersections}
\end{table}

Combining these results, we conclude that for a fixed $p\in P$, there are at most $3^n 6^{\frac{n(n-1)}{2}}$ possible choices for $p'\in P$ such that $\overline F \cdot p \cap \Fo \cdot p'\neq \emptyset$.
\end{proof}
\begin{remark}

When $n=2$, the above proposition gives $M\leq 54$.
However, note that in this case $p' = 2^{\lambda'} \begin{bmatrix}
2^{\kappa'} & 2^{-\kappa'}\mu' \\
0 & 2^{-\kappa'}
\end{bmatrix}$, therefore we only have one $\kappa'$ and $\mu'$ to choose, and $p_{1,2}\in \{-2, 0, 2\}$.
Therefore by bounding $\mu'$ for each $p_{1,2}$ separately, we can obtain an improved bound.
By Claim \ref{claim:count}, we still have at most 6 choices when $p_{1,2}=-2$.
Adapting Claim \ref{claim:count} in the case $p_{1,2}=0$, (intervals $[\alpha, \alpha+1]$), there are at most 3 choices for $\mu'$.
Finally, for in the case $p_{1,2}=2$ (intervals $[\alpha,\alpha+\frac{1}{4}]$), we have for $0<\epsilon<\frac{1}{4}$ there are at most 2 choices of $\mu'$, and for $\frac{1}{4}\leq \epsilon<\frac{1}{2}$ there are at most 3 choices for $\mu'$.
So based on $\epsilon$, one gets that $M\leq 33$ and $M\leq 36$, respectively.

The case for $n=2$ was also previously studied in \cite{GST}.
Proposition \ref{prop:boundOnM}, together with Theorem \ref{theorem:frame}, shows that using our methods one can construct frames with significantly better frame condition numbers than the construction in  \cite{GST}. 
Indeed, the ratio of the frame bounds in our construction is $\frac{C_2}{C_1}\leq 54$, whereas the frame condition number for the construction in \cite{GST} was $\frac{C_2}{C_1}\sim 1782$. (In fact, it is only mentioned in \cite{GST} that $M<\infty$. However, looking into their arguments closely, one can obtain the bound $M\sim 1782$. Also, note that there is a typo in the definition of $M$ in \cite{GST}; the correct formula should be 
$M=\sup_{p\in P}|\{p'\in P: p\cdot D\cap p'\cdot D\neq \emptyset\}|.$) As the rate of convergence in frame calculations when approximating signals is highly sensitive to the frame condition number (i.e. the ratio of the frame bounds), our methods result in much more practical and efficient frames than those of \cite{GST} for the case of $n=2$. 
\end{remark}

\section{Concrete Frame Construction for \texorpdfstring{$n=2$}{n=2}}
So far, we have established that the proposed sets $F, \Fo$, and $P$ satisfy the conditions of Theorem \ref{theorem:frame}, and have calculated $M$ as well.  In this section, we use these sets and follow the construction in Theorem \ref{theorem:frame}, to give an explicit example of a discrete frame for $L^2(\M_2(\RR))$. A similar approach can be taken for higher dimensions, but we focus our attention on $n=2$ for now.

The Iwasawa decomposition for $\GL_2(\RR)$ can be stated as follows: Let ${\rm O}_2$ denote the group of orthogonal $2\times 2$ matrices, ${\rm D}_2$ denote the diagonal $2\times 2$ matrices with positive diagonal entries and determinant 1, and ${\rm T}_2$ denote the $2\times 2$ unit upper triangular matrices.
Every element of $\GL_2(\RR)$ can be uniquely decomposed as an ordered product
of elements in ${\rm O}_2$, ${\rm D}_2$, and ${\rm T}_2$. That is, $\GL_2(\RR)={\rm O}_2 {\rm D}_2{\rm T}_2$. Note that ${\rm O}_2$ is compact, and ${\rm T}_2$ and ${\rm D}_2$ are both abelian subgroups of $\GL_2(\RR)$.

\paragraph{Tiling System.} 
Let $P=\left\{2^\lambda \left(
\begin{array}{cc}
2^{\kappa} &  2^{-\kappa}\mu  \\
0   &   2^{-\kappa}
\end{array}
\right): \lambda, \kappa,\mu \in {\mathbb Z}\right\}$,
\[F=\left\{
\left(
\begin{array}{cc}
  \pm\cos \theta  & \mp\sin \theta \\
  \sin\theta  & \cos\theta  \\
\end{array}
\right)
\left(
\begin{array}{cc}
s w &  s w y \\
0   &  s w^{-1}
\end{array}
\right) : \theta \in [0,2\pi),\,
s,w  \in [1,2),\, y\in[0,1) \right\}.\]
Then $(F,P)$ forms a tiling system in the sense of Definition \ref{def:TilingSystem} for $\GL_2(\RR)$. After projecting on the $(s,w,y)$-space, this tile and some of its translates under the action of $P$ are shown in Figure \ref{fig:Tiling}. Note that by  Proposition \ref{prop:boundOnM}, we have $M\leq 54$.

\begin{figure}[t]
  {\centering
  \includegraphics[clip,height=10cm]{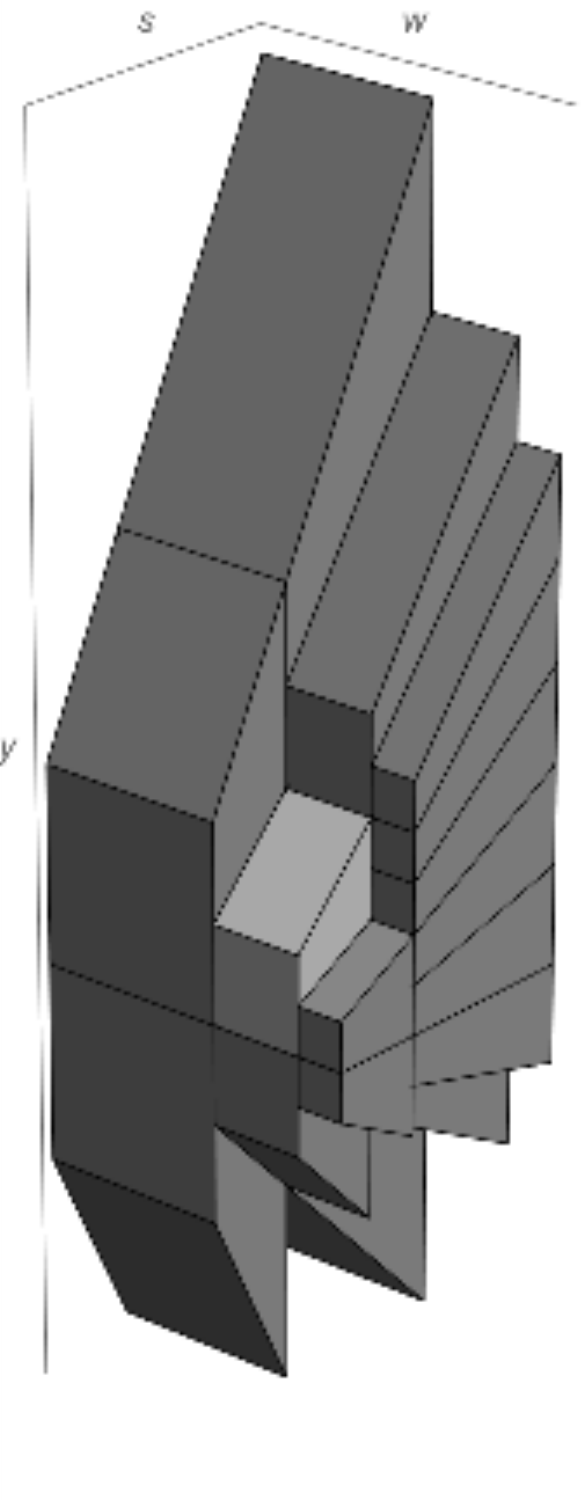}\par}%
  \caption{Multiscale tiling based on $(F,P)$. The base tile is lighter. Several shifted tiles are shown in darker gray.}%
  \label{fig:Tiling}%
\end{figure}

As before, we take $0<\epsilon\leq \frac{1}{2}$, and define $\Fo$ to be 
\[
\Fo= \left\{ \begin{pmatrix}
\pm\cos \theta & \mp\sin \theta\\
\sin \theta & \cos \theta
\end{pmatrix}
\begin{pmatrix}
s w & s w y \\
0 & s w ^{-1}
\end{pmatrix}\;\middle|\;
\begin{aligned}
\theta \in [0,2 \pi )\\
s, w \in (1- \epsilon , 2 + \epsilon) \\
y\in(-\epsilon, 1+\epsilon)
\end{aligned}
\right\}.
\]

\paragraph{Reconstruction Formula.}
Now, we recall the explicit formulation of the reconstruction formula (\ref{eqn:General Reconstruction Formula})  for $n=2$. The wavelet condition (Equation (\ref{A})) and reconstruction formula (Equation (\ref{B})) were obtained in Theorem 2.1 of \cite{GST}.

Let $\psi\in L^2(\mathbb{R}^4)$. If
\begin{equation}\label{A}
\int\limits_{\mathbb{R}^4}\left|\widehat{\psi}(h_{1},h_{2},h_{3},h_{4})\right|^2
\frac{dh_{1}dh_{2}dh_{3}dh_{4}}{|h_{1}h_{4}-h_{2}h_{3}|^2}=1,
\end{equation}
then $\psi$ is a wavelet.
For $x,y\in \M_2(\RR)$ and $h\in \GL_2(\RR)$, define 

%

\noindent
\resizebox{\textwidth}{!}{
$\psi_{x,h}(y)={\frac{1}{|h_{1}h_{4}-h_{2}h_{3}|}\psi\Big(\frac{h_4(y_1-x_1)-h_2(y_3-x_3), h_4(y_2-x_2)-h_2(y_4-x_4), h_1(y_3-x_3)-h_3(y_1-x_1), h_1(y_4-x_4)-h_3(y_2-x_2)}{h_1h_4-h_2h_3}\Big).}$}

Then, for any $f\in L^2(\mathbb{R}^4)$, we have
\begin{equation}\label{B}
f=\int\limits_{\mathbb{R}^4}\int\limits_{\mathbb{R}^4}\left\langle f,\psi_{x,h}\right\rangle  \psi_{x,h}\,\,
\frac{dx_1\cdots dx_4\,dh_1\cdots dh_4}{|h_{1}h_{4}-h_{2}h_{3}|^4},
\end{equation}
weakly in $L^2(\mathbb{R}^4)$.
Conversely, if {\rm (\ref{B})} holds for every $f\in L^2(\mathbb{R}^4)$, then
$\psi$ is a wavelet.

Next, we discretize this reconstruction formula to make it computationally feasible.

\paragraph{The Discrete Frame.} 
To discretize the above continuous frame, we find a 4-dimensional cube $R$ containing $\Fo$. Suppose 
$$R=\left\{(x_1,x_2,x_3,x_4): x_i\in[a_i,b_i]\ \mbox{ for }\ i\in \{1,2,3,4\}\right\},$$
where $a_i<b_i$, for $1\leq i\leq 4$, are fixed real numbers. We need to determine appropriate values for $a_i$ and $b_i$ so that $\Fo\subseteq R$. Consider an arbitrary element of $\Fo$ together with its Iwasawa decomposition, say
\[
\left(\begin{array}{cc}
x_1  & x_2   \\
x_3 & x_4  
\end{array}
\right)=\left(\begin{array}{cc}
\pm\cos\theta  & \mp\sin\theta   \\
\sin\theta & \cos\theta  
\end{array}
\right)\left(\begin{array}{cc}
sw & swy   \\
0 & \frac{s}{w}  
\end{array}
\right),
\]
where $s,w\in (1-\epsilon,2+\epsilon)$ and $y\in (-\epsilon, 1+\epsilon)$. Comparing the two sides of the above matrix equation, we get
\begin{eqnarray*}
|x_1|,|x_3|&\leq& |sw|\leq (2+\epsilon)^2< 7,\\
|x_2|,|x_4|&<& \sqrt{|swy|^2+|\frac{s}{w}|^2}\leq (2+\epsilon)(1+\epsilon)\sqrt{w^2+\frac{1}{w^2}}\leq(2.5)(1.5)\sqrt{6.5}<10,
\end{eqnarray*}
where we used the fact that $\epsilon\leq \frac{1}{2}$. Thus, we can set $a_1=a_3=-7$, $b_1=b_3=7$, $a_2=a_4=-10$, and $b_2=b_4=10$.

Let $L^2(R)$ be the closed subspace of
$L^2(\RR^4)$ consisting of all the elements supported on $R$.
We can construct an orthonormal basis of $L^2(R)$ indexed by the set $J$ defined as
\[
J=\left\{\lambda=\left(
\begin{array}{cc}
\lambda_1 &  \lambda_3  \\
\lambda_2  & \lambda_4
\end{array}
\right) \;\middle|\; \lambda_1,\lambda_3\in\frac{1}{14}\ZZ,\, \,  \lambda_2,\lambda_4\in\frac{1}{20}\ZZ
\right\}.
\]%
Then for all $g\in L^2(\M_2(\RR))$ which satisfy $\mathds{1}_{\overline F} \leq \widehat g \leq \mathds{1}_{\Fo}$, we have
\[ \{ \rho[\lambda,p]^{-1}g \; : \; (\lambda,p)\in J\times P\} \]
is a discrete frame with frame bounds $C_1=|R|, C_2=|R|M$. That is%
\[
  |R|\, \|f\|^2
  \leq
  \sum_{k\in P} \sum_{\gamma\in J} \left| \langle f, \rho[\gamma, k]^{-1}g \rangle_{L^2(\M_2(\RR))} \right|^2
  \leq
  |R|M\, \|f\|^2
\]
for all $f\in L^2(\M_2(\RR))$. If necessary or useful, it is easy to explicitly compute $\rho[\lambda,p]^{-1}g$, with a formula similar to $\psi_{x,h}=\rho[x,h]\psi$ (which was explicitly computed previously). 

\begin{remark}
Note that $|R|$ in the above construction is $14^2\times 20^2$, which is by far smaller than the similar parameter from \cite{GST}, which was $176^4$. 
\end{remark}

\section{%
\texorpdfstring{{\color{\ke}Conclusions and}}{Conclusions and} Future Directions}
\label{sec:Discussion}
{\color{\ke}
In this article, we construct a novel frame for the Hilbert space $L^2(\mathbb R^{n^2})$ for arbitrary $n$.
We do so by carefully considering the action of $\GL_n(\mathbb R)$ on $\M_n(\mathbb R)$ to develop a well-spread sampling set which is particularly well-structured, also making it computationally tractable for purposes of applications.
Additionally, this approach provides more freedom in the choice of analyzing wavelet, as opposed to the methods of coorbit theory.
More importantly, we view this work as a prototypical example demonstrating methods for finding explicit atomic decompositions/discrete frames, which can be applied to other classes of semidirect product groups.

Finally, our construction gives explicit frame bounds for general signals, which is not the case for arbitrary sampling sets (see Remark \ref{refremark} for more discussion).
We are currently investigating how restrictions to certain subclasses of signals can yield additional control over the frame bounds.
In the near future, we expect to also}
construct similar frames for the Hilbert space of Sobolev functions $H^k(\mathbb{R}^{n^2})$.
We expect the regularity conditions will provide additional control over the frame bounds.
This should result in a much smaller frame ratio, which we expect to be dependent on $\epsilon$; as opposed to the current construction for which the frame bounds are independent of parameter $\epsilon$.
In particular, we expect to see direct relations between how small the frame ratio is and how much regularity can be assumed on the signals of interest.

\section{Acknowledgements}
This project was initiated during a summer research program funded by University of Delaware \emph{Graduate Program Improvement and Innovation Grants} ``GEMS''. The second  author thanks University of Delaware for funding  his GEMS project in summer 2016 and summer 2018. The first author was partially supported by University of Delaware Research Foundation, and partially by NSF grant DMS-1902301, while this work was being completed. She also thanks the Department of Mathematical Sciences at Delaware for its support while important revisions were made to the paper.
{\color{\ke} We sincerely thank the anonymous reviewer for critically reading the manuscript and suggesting substantial improvements. Additionally we would like to thank Karlheinz Groechenig for several helpful remarks on an early draft and providing information about historical context of our work.}
Finally, the authors would like to thank Nathaniel Kim and Paige Shumskas for proofreading early drafts of this work.

\section*{References.}
\vspace{-0.6cm}
\renewcommand{\refname}{}

\end{document}